\newtheorem{theorem}{Theorem}
\newtheorem{lemma}{Lemma}
\newtheorem{corollary}[theorem]{Corollary}
\theoremstyle{definition}
\newtheorem*{definition}{Definition}
\DeclareMathOperator{\arb}{arb}
\theoremstyle{remark}
\newcommand{\ep}{\varepsilon}
\title{Further results on arc and bar $k$-visibility graphs}
\author{Mehtaab Sawhney\\
\href{mailto:msawhney98@yahoo.com}{msawhney98@yahoo.com}
\and
Jonathan Weed\\
\href{mailto:jweed@mit.edu}{jweed@mit.edu}}
\date{\today}
\begin{document}

\maketitle

\begin{abstract}We consider visibility graphs involving bars and arcs in which lines of sight can pass through up to $k$ objects. We prove a new edge bound for arc $k$-visibility graphs, provide maximal constructions for arc and semi-arc $k$-visibility graphs, and give a complete characterization of semi-arc visibility graphs. We show that the family of arc $i$-visibility graphs is never contained in the family of bar $j$-visibility graphs for any $i$ and $j$, and that the family of bar $i$-visibility graphs is not contained in the family of bar $j$-visibility graphs for $i \neq j$. We also give the first thickness bounds for arc and semi-arc $k$-visibility graphs. Finally, we introduce a model for random semi-bar and semi-arc $k$-visibility graphs and analyze its properties.
\end{abstract}

\section{Introduction}
Visibility graphs are a general class of graphs that model lines of sight between geometric regions. Studied since at least the 1960's, these graphs have received attention partially due to their utility in modeling topics of practical interest, such as very-large-scale integration (VLSI)~\cite{schlag1983algorithm,duchet1983representing} and robot motion planning~\cite{nilsson1969mobile,oommen1987robot}.

In this work, we primarily consider \emph{arc} visibility graphs, in which we restrict our attention to radial lines of sight between concentric circular arcs. (Precise definitions of these and other concepts appear in Section~\ref{sec:defs}.) These graphs, introduced and characterized by Hutchinson~\cite{hutchinson2002arc} under the name polar visibility graphs, possess a natural connection to the geometry of the projective plane.

In standard visibility graphs, lines of sight are taken to be line segments in the plane that intersected no regions except at their endpoints. However, since the seminal paper of Dean et al.~\cite{dean2007bar}, more general constructions have been considered as well. The notion of \emph{$k$-visibility} they define allows lines of sight to intersect up to $k$ regions in addition to the two regions at the endpoints.

Using this definition, Babbitt et al.~\cite{babbitt2013k} generalized Hutchinson's work by considering arc $k$-visibility graphs and introduced semi-arc $k$-visibility graphs by analogy to the semi-bar visibility graphs defined by Felnsner and Massow~\cite{felsner2008parameters}. They proved edge bounds for these graphs and posed a number of open questions about their other properties.

In this work, we significantly extend the work of Babbitt et al.~\cite{babbitt2013k}. In Section~\ref{sec:class}, we give a complete characterization of semi-arc visibility graphs, which complements the characterization of arc visibility graphs given by Hutchinson~\cite{hutchinson2002arc}. In Section~\ref{sec:edge}, we prove a stronger edge bound for arc $k$-visibility graphs and provide constructions showing edge bounds for arc visibility graphs and for semi-arc $k$-visibility graphs to be tight. In Section~\ref{sec:thick}, we give the first nontrivial thickness bounds for arc and semi-arc $k$-visibility graphs.
In Section~\ref{sec:inclusion}, we consider the relationship between arc $k$-visibility graphs and the more common class of bar $k$-visibility graphs. Finally, in Section~\ref{sec:random}, we introduce and analyze a model for random semi-arc $k$-visibility graphs.

\section{Preliminaries}
\label{sec:defs}
\subsection{Visibility and $k$-visibility graphs}
We begin by briefly defining the notion of a visibility graph before specializing to the particular classes that will be the focus of this work.

A visibility graph is a graph whose vertices correspond to regions in the plane. Two vertices are connected whenever the corresponding regions are connected by an unobstructed line of sight. We can define a class of such graphs by specifying a family of allowable regions and lines of sight. A graph arising in this way is known as a \emph{visibility graph}, and the corresponding arrangement of regions is known as a \emph{visibility representation}.

Dean et al.~\cite{dean2007bar} generalized this concept by defining \emph{$k$-visibility graphs}, which are identical to visibility graphs except that a line of sight may intersect up to $k$ regions in addition to the two regions it connects.

\begin{figure}[h]
\centering
\begin{subfigure}[b]{.40\linewidth}
  \includegraphics[width=\linewidth]{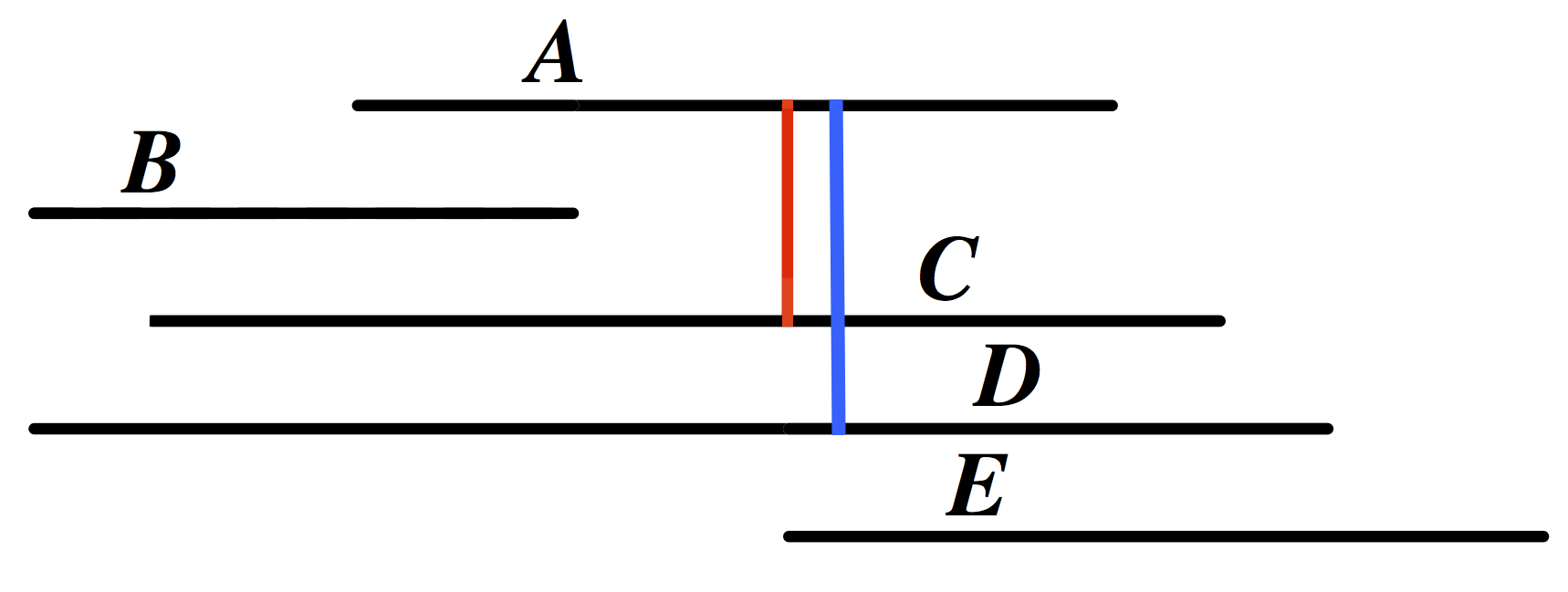}
  \caption{Visibility representation}
\end{subfigure}
\hfill
\begin{subfigure}[b]{.25\linewidth}
  \includegraphics[width=\linewidth]{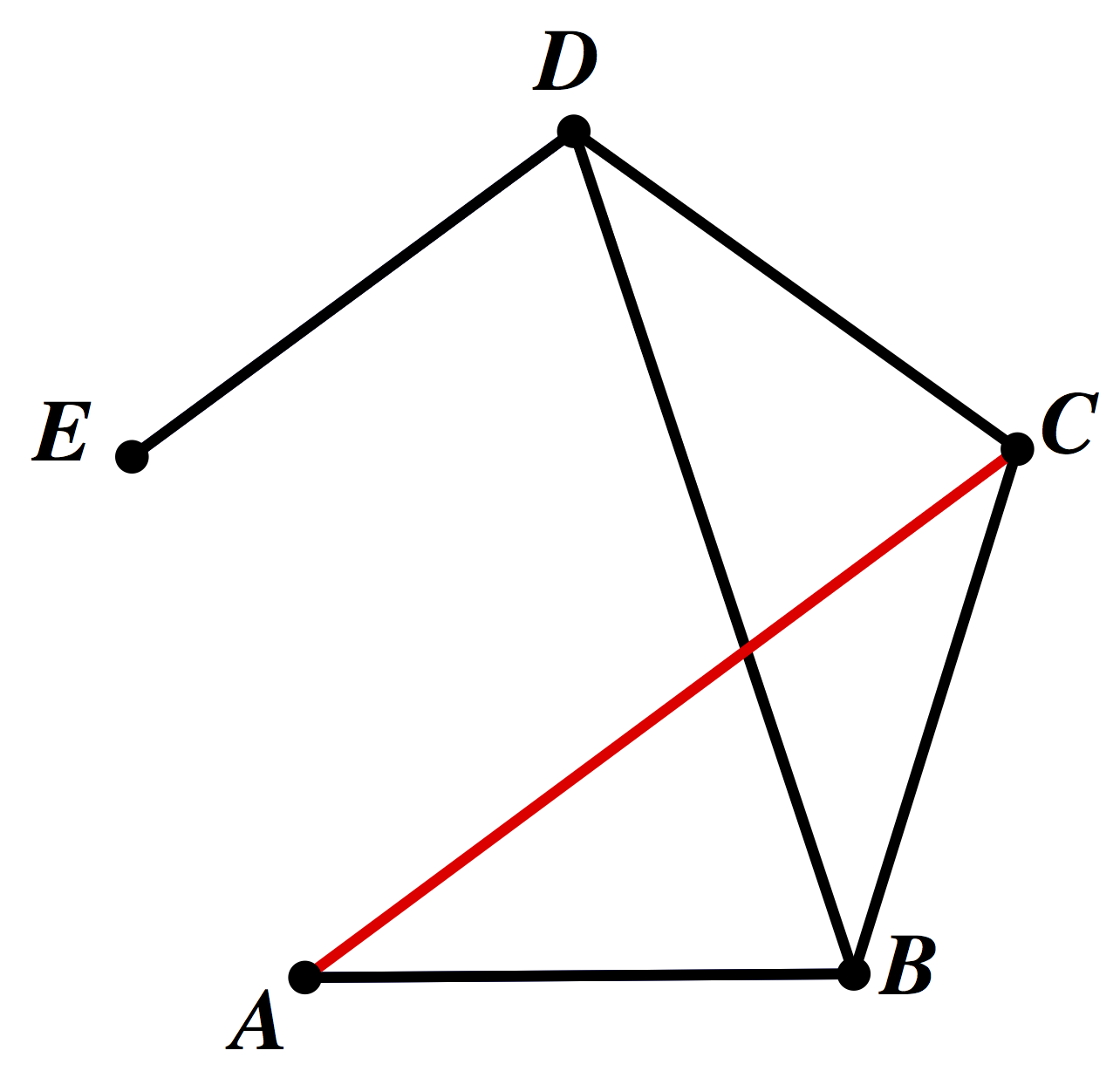}
    \caption{Visibility graph}
\end{subfigure}
\hfill
\begin{subfigure}[b]{.25\linewidth}
  \includegraphics[width=\linewidth]{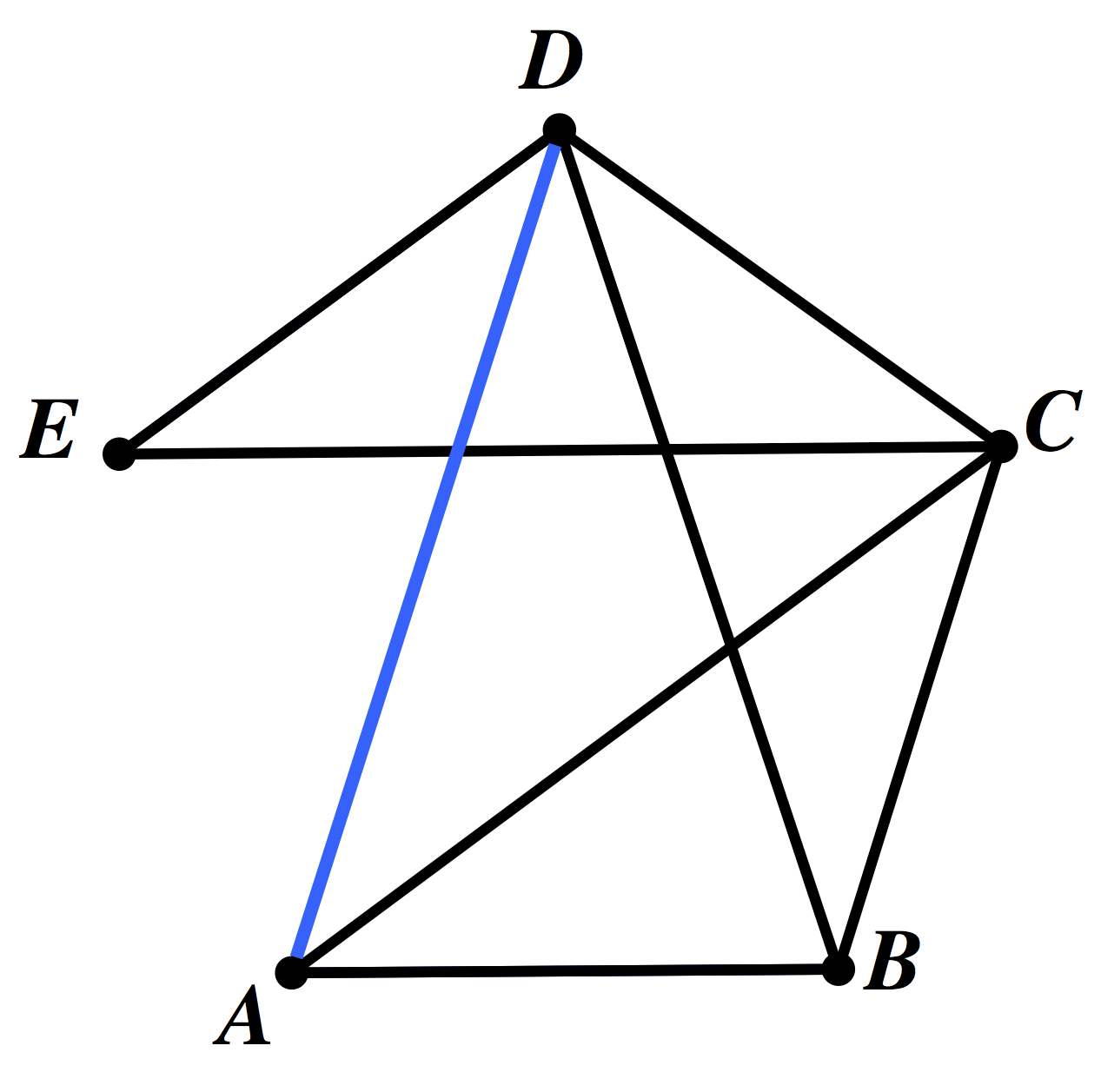}
    \caption{$1$-visibility graph}
\end{subfigure}
\caption{Bar visibility}
\label{fig:barkdef}
\end{figure}

The simplest example is the family of bar ($k$-)visibility graphs. These are defined by taking the regions to be nonintersecting closed horizontal line segments in the plane (``bars'') connected by vertical lines of sight. Requiring lines of sight to be unobstructed yields bar visibility graphs; allowing them to intersect up to $k$ additional bars yields bar $k$-visibility graphs. Figure~\ref{fig:barkdef} shows a collection of bars and the corresponding visibility and $1$-visibility graphs. 

\begin{figure}[h]
\centering
\begin{subfigure}[b]{.3\linewidth}
  \includegraphics[width=.9\linewidth]{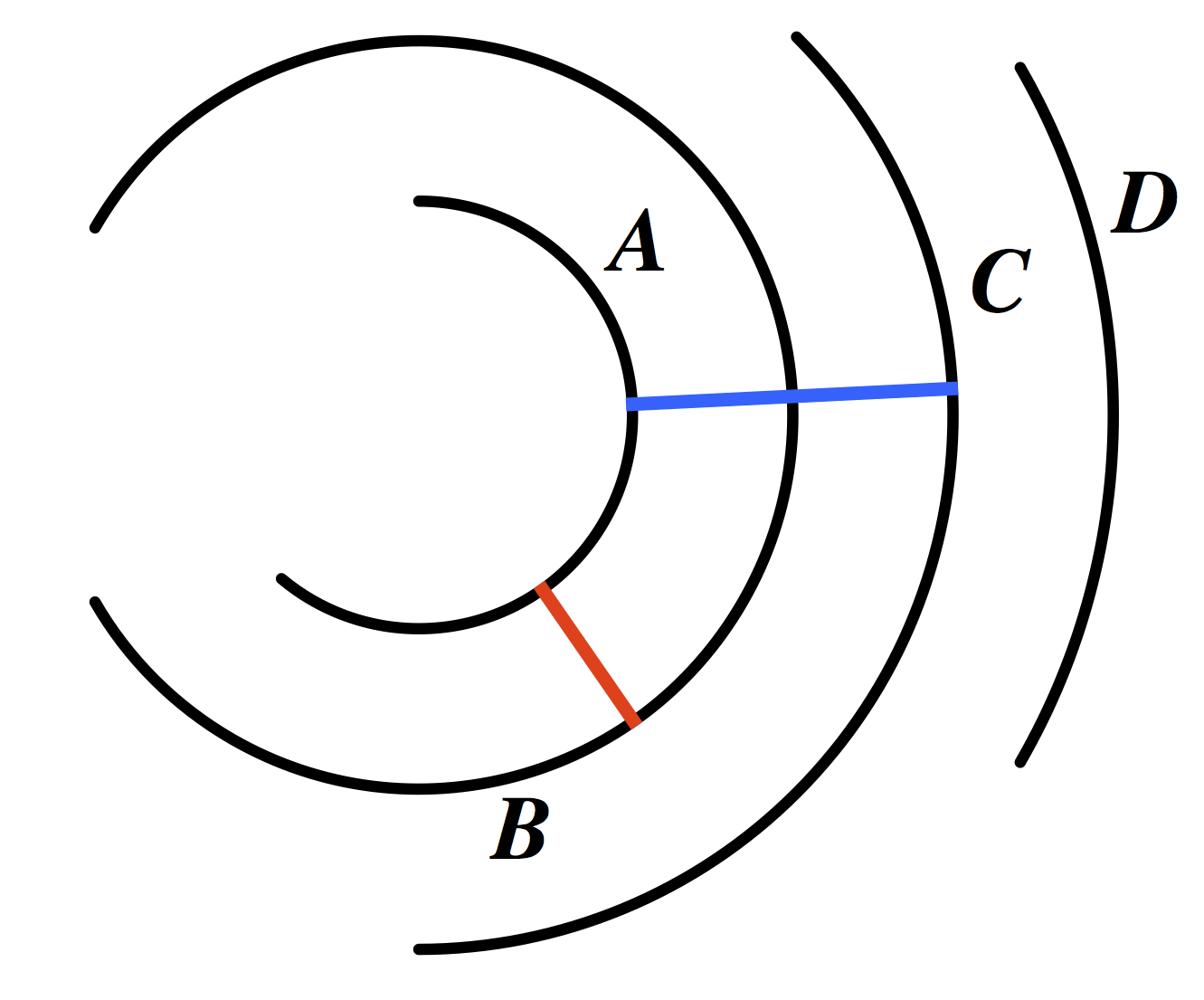}
  \caption{Visibility representation}
\end{subfigure}
\hfill
\begin{subfigure}[b]{.25\linewidth}
  \includegraphics[width=.9\linewidth]{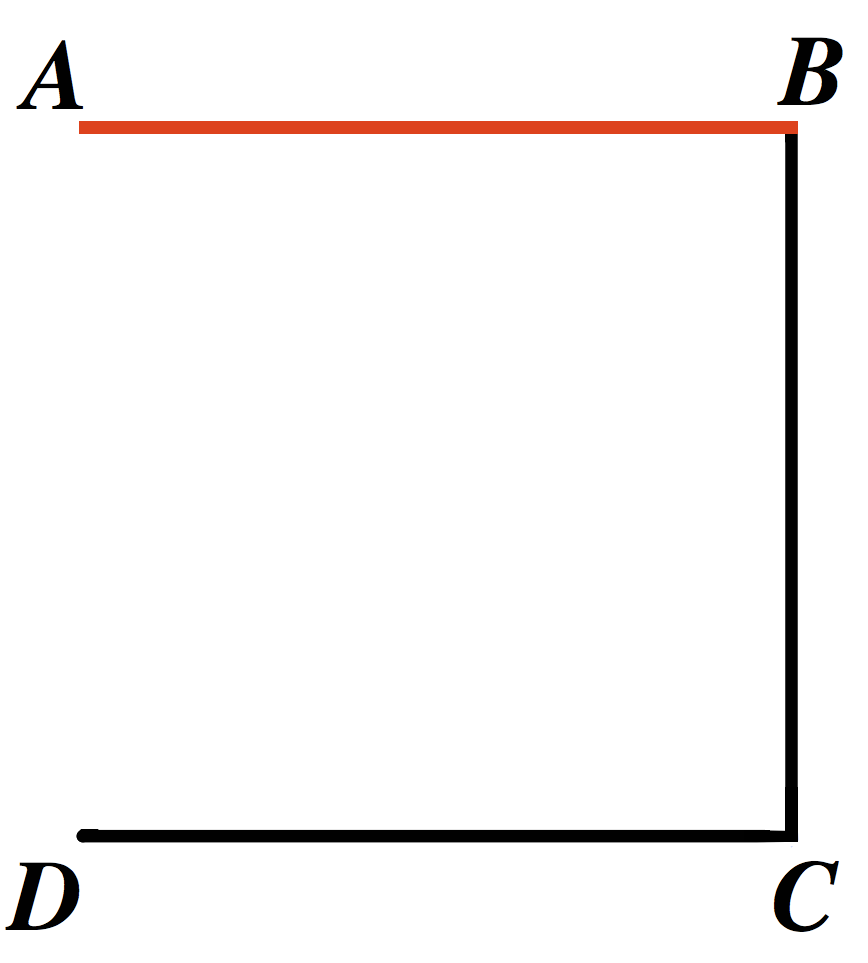}
  \caption{Visibility graph}
\end{subfigure}
\hfill
\begin{subfigure}[b]{.25\linewidth}
  \includegraphics[width=.9\linewidth]{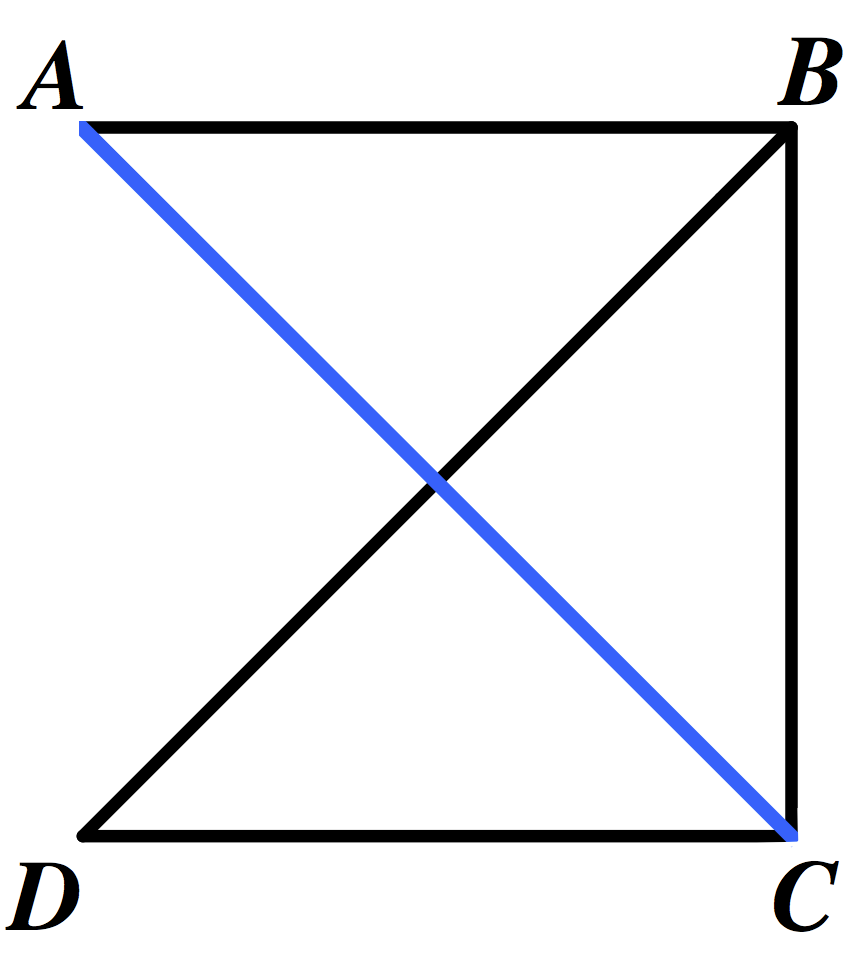}
  \caption{$1$-visibility graph}
\end{subfigure}
\caption{Arc visibility}
\label{fig:arckdef}
\end{figure}

Of primary importance in the sequel are arc ($k$-)visibility graphs, introduced by Hutchinson~\cite{hutchinson2002arc} and~Dean et al.~\cite{dean2007bar}. These are defined by taking the regions to be nonintersecting concentric circular arcs and lines of sight to be radial line segments, which may pass through the center of the circle. As above, visibility and $k$-visibility graphs obtain when lines of sight may pass through $0$ or at most $k$ intervening bars, respectively. Examples appear in Figure~\ref{fig:arckdef}.

There is a slight subtlety in defining arc $k$-visibility graphs, since a radial line may intersect an obtuse arc more than once. We adopt the convention that for the purpose of counting visibilities these double intersections are counted only once.

Finally, we consider in addition two important special cases of the classes defined above. Semi-bar visibility graphs, introduced by Felsner and Massow~\cite{felsner2008parameters}, are bar visibility graphs where we insist that the left endpoints of all the bars lie on the same vertical line. Likewise, in semi-arc visibility graphs, introduced by Babbitt et al.~\cite{babbitt2013k}, arcs extend in a counterclockwise direction from the same radial ray. Figure~\ref{fig:semi} gives examples of semi-bar and semi-arc visibility representations.

\begin{figure}[h]
\centering
\begin{subfigure}[b]{.45\linewidth}
\centering
  \includegraphics[width=.6\linewidth]{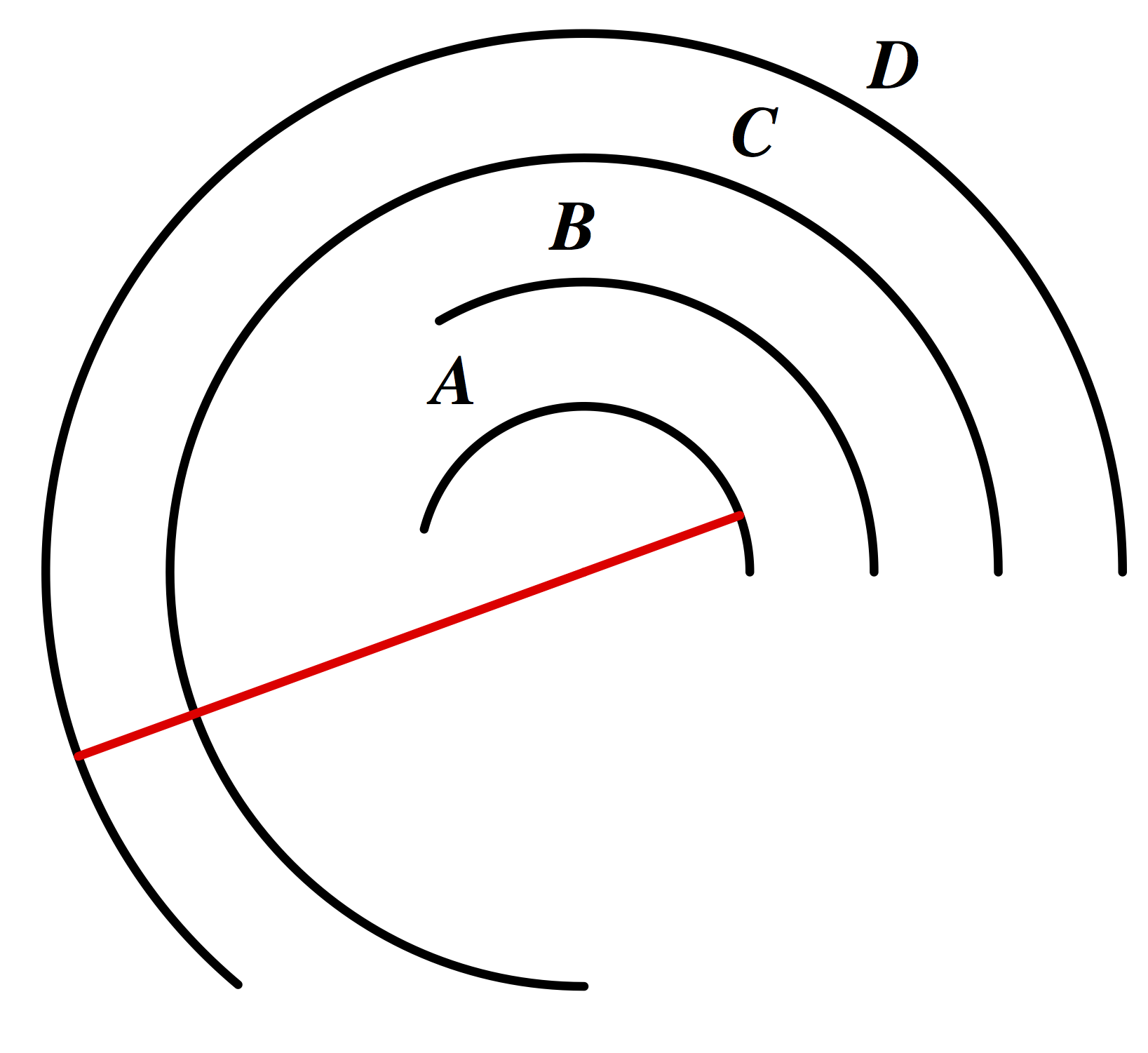}
  \caption{Semi-arc visibility representation}
\end{subfigure}
~
\begin{subfigure}[b]{.45\linewidth}
  \includegraphics[width=\linewidth]{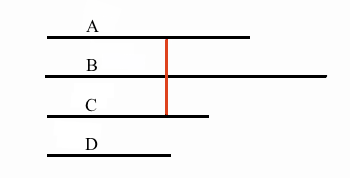}
  \caption{Semi-bar visibility represenation}
\end{subfigure}
\caption{}
\label{fig:semi}
\end{figure}

\subsection{Notation and terminology}
For~$n \geq 3$, denote by~$K_n$ the complete graph on~$n$ vertices and by~$C_n$ the cycle on~$n$ vertices.

The \emph{argument} of a point or ray in an arc or semi-arc visibility representation is its angular position, when measured with respect to the positive $x$-axis. Given an arc in such a representation, it is possible to choose arguments $\alpha$ and $\beta$ for its endpoints such that $0 < \beta - \alpha < 2 \pi$. Call the endpoint corresponding to $\beta$ the \emph{positive} endpoint and the endpoint corresponding to $\alpha$ the \emph{negative} endpoint.

\section{Classification of semi-arc visibility graphs}
\label{sec:class}
Before obtaining a full classification of semi-arc visibility graphs, we first consider the question of planarity. Babbitt et al.~\cite{babbitt2013k} observe that $K_5$ is an arc visibility graph, so not all arc visibility graphs are planar. The following theorem shows that no such example is possible once we restrict to semi-arc visibility graphs.
\begin{theorem}\label{thm:arcplanar}
All semi-arc visibility graphs are planar.
\end{theorem}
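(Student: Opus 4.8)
The plan is to read a planar drawing directly off a semi-arc visibility representation, handling the ordinary lines of sight and the through-the-center ones separately. Fix a representation with the common radial ray equal to the positive $x$-axis, so each arc $A_i$ has negative endpoint at argument $0$ and positive endpoint at argument $\theta_i\in(0,2\pi)$; let $r_1<\dots<r_n$ be the radii, which are necessarily distinct since two equiradial arcs sharing a negative endpoint would overlap. After an arbitrarily small perturbation I may assume the $\theta_i$ are distinct and that no visibility is realized only in a limiting position.

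The first step is to classify the edges. A line of sight joining $A_i$ and $A_j$ with $r_i<r_j$ is either a radial segment missing the center (a \emph{direct} visibility) or a diameter (a \emph{central} visibility). A direct visibility between $A_i$ and $A_j$ at some argument $\varphi\le\min(\theta_i,\theta_j)$ exists exactly when $\theta_m<\varphi$ for every $m$ with $r_i<r_m<r_j$; since this weakens as $\varphi$ grows, $v_i\sim v_j$ directly iff $\theta_m<\min(\theta_i,\theta_j)$ for every radially intermediate $m$. A diameter at arguments $\psi$ and $\psi+\pi$ (with $\psi\in[0,\pi)$) realizes a visibility only between the innermost arc reaching argument $\psi$ and the innermost arc reaching argument $\psi+\pi$, everything farther out on either side being blocked through the center; call these $P(\psi)$ and $Q(\psi)$. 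Two facts will be used: $P$ and $Q$ are monotone in $\psi$ (as $\psi$ grows, fewer arcs reach $\psi$ and fewer reach $\psi+\pi$, so $P(\psi),Q(\psi)$ only move outward), and whenever $P(\psi)\neq Q(\psi)$ one has $r_{P(\psi)}<r_{Q(\psi)}$ (since $Q(\psi)$ also reaches $\psi$). Hence the central edges are exactly the pairs $\{P(\psi),Q(\psi)\}$ with $P(\psi)\neq Q(\psi)$, and each is incident only to arcs that are left-to-right maxima of $(\theta_i)$ in radius order.

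The direct edges are easily disposed of. Placing the vertices on a circle in radius order $v_1,\dots,v_n$, a pair of crossing direct edges $v_iv_j,v_kv_l$ with $i<k<j<l$ would force $\theta_k<\theta_j$ (as $v_k$ lies radially between $v_i$ and $v_j$) and $\theta_j<\theta_k$ (as $v_j$ lies radially between $v_k$ and $v_l$), which is absurd; so the direct-visibility graph is outerplanar. Equivalently, drawing the arcs as the concentric arcs themselves (slightly thickened) and routing each direct edge along its witnessing radial segment gives a crossing-free drawing of the direct edges, since radial segments at distinct arguments are disjoint and those at a common argument join radially consecutive arcs.

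Incorporating the central edges is the crux, and here I expect the real work. Routing each central edge along its witnessing diameter fails at once because every diameter passes through the origin, and one cannot merely perturb them inside the arc-free disk of radius $r_1$ about the origin, since there the central edges become chords whose endpoints interleave pairwise on the boundary circle and so cannot all be drawn disjointly; indeed the central edges genuinely interleave in radius order (for instance $K_4$ has a semi-arc representation in which the three chords of the radius-order path $v_1v_2v_3v_4$ are all central), so outerplanarity really is destroyed and no single spine order gives an obvious two-page embedding. The approach I would take is to route central edges using the arc-free disk of radius $r_1$ about the origin together with the arc-free angular wedge of arguments in $(\theta_{\max},2\pi)$, $\theta_{\max}=\max_i\theta_i$, and the region beyond $A_n$: a central edge $\{P(\psi),Q(\psi)\}$ leaves $A_{P(\psi)}$ at argument $\psi$, runs along that arc to the common ray, slips past its negative endpoint into the wedge, drops to a radius just inside $r_1$, winds around to argument $\psi+\pi$, and goes radially out to $A_{Q(\psi)}$ — every stretch being arc-free by the definitions of $P$ and $Q$. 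What must be checked is that this family of detours (together with the short runs of all edges along the thickened arcs) can be drawn pairwise crossing-free, and the monotonicity of $P$ and $Q$ is exactly what organizes this: ordering the central edges by the argument of their diameter makes both endpoints advance monotonically, so the detours can be nested. A cleaner route, which I suspect is what one should actually write, is induction on $n$: deleting the outermost arc $A_n$ leaves a semi-arc visibility representation, so by induction $G-v_n$ has a plane drawing coming from its representation, and one then verifies that the neighbours of $v_n$ — the staircase of arcs it sees directly, together with the run of arcs near the origin it sees through the center — occur in a compatible cyclic order along the outer face, so $v_n$ can be reinserted there. In either approach the obstacle is the same: the central edges force the drawing to exploit the non-simply-connected complement of the arcs, and the monotonicity of ``the innermost arc reaching a given argument'' is the tool that keeps this under control.
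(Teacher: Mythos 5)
Your structural analysis is correct as far as it goes: the characterization of direct visibilities ($v_i\sim v_j$ iff every radially intermediate arc has smaller argument than both), the resulting outerplanarity of the direct-edge graph in radius order, and the description of central edges via the monotone functions $P(\psi)$, $Q(\psi)$ are all right. But the proof stops exactly where the theorem actually lives. You yourself flag the decisive step as unverified ("What must be checked is that this family of detours \dots can be drawn pairwise crossing-free"; "one then verifies that the neighbours of $v_n$ \dots occur in a compatible cyclic order"), and neither of your two suggested completions is routine. An outerplanar graph plus extra chords need not be planar, so everything rests on the central edges, and "the detours can be nested" by monotonicity of $P$ and $Q$ is a genuinely nontrivial claim: all the detours share the same inner disk and the same arc-free wedge, so one must choose the radii of the circular runs and the arguments of the descents consistently (a later detour's radial exit at $\psi_2+\pi$ will cross an earlier detour's circular run unless the radii are ordered one way, while the descents through the wedge force the opposite ordering unless their arguments are also staggered), and one must additionally check that the runs along the thickened arcs do not conflict with direct edges landing on those arcs from the appropriate side. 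None of this is fatal in principle, but it is the entire content of the theorem and it is absent.

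For comparison, the paper avoids the drawing-by-hand problem altogether: it first contracts away arcs whose argument is a local minimum (degree-$2$ vertices), splits the graph at the cut vertex given by the arc of maximal argument, and thereby reduces to representations in which the arguments strictly increase with radius; it then snaps endpoints to a regular $m$-gon and shows the resulting graph is a minor of a single canonical graph $H_m$, for which an explicit planar embedding is exhibited. If you want to salvage your approach, the induction on the outermost arc is the more promising of your two routes, but you would still need to prove the "compatible cyclic order along the outer face" claim, which amounts to carrying a strengthened inductive hypothesis about the embedding, not just planarity.
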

\begin{proof}
Fix a semi-arc visibility graph $G$ and a corresponding representation. We can assume that all arcs have radially distinct endpoints and distinct radii, since we can perturb the arcs slightly to yield a graph that is nonplanar if $G$ is.

To show that $G$ is planar, we will alter the representation of $G$ to produce a representation of a new graph $G''$ in such a way that $G$ is planar if $G''$ is. We conclude by producing an explicit planar embedding of $G''$.

Label the arcs $a_1, \dots, a_n$ with indices increasing with increasing radius, and label the vertices of $G$ as $v_1, \dots, v_n$ such that $v_i$ corresponds to $a_i$ for all $i$. We begin by removing arcs that do not affect the planarity of $G$. Suppose there is a sequence $a_{j-1}, a_j, a_{j+1}$ such that the arc $a_j$ has a smaller argument than both $a_{j-1}$ and $a_{j+1}$. Then $v_j$ is connected to only $v_{j-1}$ and $v_{j+1}$ in $G$. Removing $a_j$ from the representation corresponds to contracting the edge between $v_j$ and $v_{j+1}$ (or equivalently the edge between $v_{j-1}$ and $v_j$), which does not affect the planarity of the graph.

So we can assume that we have a semi-arc visibility representation in which the arcs strictly increase and then strictly decrease in argument. Let $a_i$ be the arc with the largest argument. Since no arc with radius larger than $a_i$ has a line of sight to any arc with radius smaller than $a_i$, removing $v_i$ disconnects $G$. Moreover, the subgraph induced by the vertices $\{v_j\}$ with $j \geq i$ is a path, so it is in particular planar. The original graph $G$ is therefore planar if and only if the subgraph induced by the vertices $\{v_\ell\}$ with $\ell \leq i$ is. Call the resulting graph $G'$.

We will now show that the graph $G'$ is a minor of a graph $G''$ that has a planar embedding. Since taking minors preserves planarity, this will imply that $G'$ and hence $G$ are planar.

By assumption, no two endpoints of the remaining arcs lie on the same radial line. There therefore exists an $\ep > 0$ such that perturbing the positive endpoint of any arc by the angle $\ep$ does not change any visibilities. Let $m$ be an integer such that $2 \pi/m \ll \ep$, and adjust the argument of the right endpoint of every arc to the closest integer multiple of $2 \pi/m$. (In other words, take each arc and snap its endpoint to the vertex of a regular $m$-gon.) The choice of $m$ guarantees that this transformation does not alter any visibilities.

Call a semi-arc visibility representation with $n$ arcs \emph{polygonal} if the arcs strictly increase in argument and the endpoint of each arc lies on a vertex of a regular $n$-gon, and denote the corresponding graph by $H_n$. We claim that $G'$ is a minor of $H_m$. Indeed, the semi-arc visibility representation of $G'$ constructed above is a subset of the semi-arc visibility representation of $H_m$. As before, label the arcs of the polygonal representation of $H_m$ as $a_1, \dots, a_n$ with indices increasing with increasing radius, and label the vertices of $H_m$ as $v_1, \dots, v_n$ in a corresponding fashion. 

First, take a subgraph of $H_m$ by removing any arcs in the polygonal representation with larger radius than all the arcs in the representation of $G'$. Now, consider removing an arc $a_i$ from the representation of $H_m$. Since the arcs' arguments strictly increase, $v_i$ is not connected to $v_j$ for $j > i+1$. Moreover, once we remove $a_i$, any visibility between $a_{i+1}$ and an arc of smaller radius was either already a visibility of $a_{i+1}$ or corresponds to a visibility that used to exist with $a_i$. This implies that removing $a_i$ from the semi-arc visibility representation has the effect of contracting the edge between $v_i$ and $v_{i+1}$.

Removing from the polygonal representation all arcs which do not appear in the representation of $G'$ therefore yields a series of contractions of edges in $H_m$ yielding $G'$. So $G'$ is a minor of $H_m$, as claimed.

Finally, it suffices to show that $H_m$ is planar for all $m \geq 3$. Planar embeddings for these graphs appear in Figure~\ref{fig:planarhm}.
\begin{figure}
\centering
\begin{minipage}{.20\linewidth}
  \includegraphics[width=\linewidth]{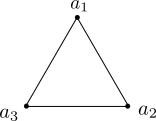}
    \end{minipage}
    \hfill
  \begin{minipage}{.35\linewidth}
  \includegraphics[width=\linewidth]{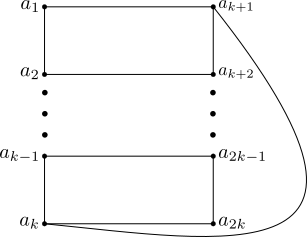}
    \end{minipage}
    \hfill
  \begin{minipage}{.3\linewidth}
  \includegraphics[width=\linewidth]{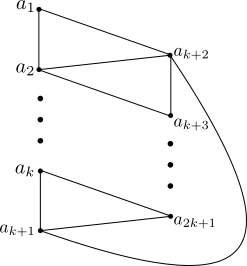}
  \end{minipage}
  \caption{Planar embeddings of $H_3$, $H_{2k}$, and $H_{2k+1}$.}
  \label{fig:planarhm}
\end{figure}
\end{proof}

Cobos et al.~\cite{cobos1996} gave a complete characterization of semi-bar visibility graphs. We extend their result to a complete classification of semi-arc visibility graphs.
\begin{definition}
A graph $G$ is \emph{outerhamiltonian} if it has a planar embedding in which there is path through all the vertices and the vertices on this path all lie on the outer face.
\end{definition}
\begin{definition}
Fix an outerhamiltonian graph $G$ and a corresponding hamiltonian path along the outer face. 
Label some subset (possibly empty) of the cutpoints of $G$ as $b_1,b_2,\dots,b_m$, with indices increasing in order along the path, and choose $1 \leq j \leq m$. A \emph{diagonal graph} of $G$ is a graph such that the following constraints hold.
\begin{itemize}
	\item{$b_1$ is connected to $b_j$ and $b_1$ is not connected to $b_i$ for $i<j$.}
	\item{If $b_i$ is connected to $b_r$ and $b_\ell$ then it is connected to $b_{k}$ for all $r<k<\ell$.}
	\item{For $1 \leq i < j$, let $b_k$ be the element with the highest index in sequence $b_1,b_2,\ldots,b_m$ such that $b_i$ is connected to $b_k$. Then $b_{i+1}$ is not connected to $b_1,\dots,b_{k-1}$ but is connected to either $b_k$ or $b_{k+1}$.}
\end{itemize}
\end{definition}
\begin{theorem}
A graph is a semi-arc visibility graph if and only if it is the union of an outerhamiltonian graph and an associated diagonal graph.
\end{theorem}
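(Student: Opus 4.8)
The plan is to prove both directions by carefully tracking the combinatorial structure of a polygonal semi-arc representation. For the forward direction, fix a semi-arc visibility graph $G$ with representation having radially distinct endpoints and distinct radii, and order the arcs $a_1,\dots,a_n$ by increasing radius. The ``outer'' edges join consecutive arcs $a_i,a_{i+1}$ whenever there is a line of sight skimming past all intermediate radii — these form a Hamiltonian path $v_1 v_2 \cdots v_n$, and the proof of Theorem~\ref{thm:arcplanar} already exhibits a planar embedding (via the minor relation to $H_m$) in which this path lies on the outer face, so the ``outer'' part is an outerhamiltonian graph. The remaining edges are ``diagonal'' edges $v_i v_k$ with $k > i+1$; I would show these satisfy the three bulleted constraints of a diagonal graph. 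The key geometric observations: (i) if $v_i$ sees $v_k$ radially, the blocking arcs $a_{i+1},\dots,a_{k-1}$ all lie strictly between them in argument on that ray, so $v_i$ also sees every $v_r$ with $i < r < k$ that is ``outermost so far'' — this gives the interval (convexity) property in the second bullet; (ii) the vertices $v_i$ that emit diagonal edges are exactly the cutpoints of $G$ (removing such a $v_i$ disconnects the arcs of smaller radius from those of larger radius, as in the planarity argument), which lets us label them $b_1,\dots,b_m$ in path order; (iii) the ``staircase'' nesting of how far out successive cutpoints can see — because the argument sequence of the arcs increases then decreases, and a diagonal from $b_{i+1}$ cannot reach past where $b_i$ already blocks — gives the first and third bullets.

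For the converse, suppose $G$ is the union of an outerhamiltonian graph $G_0$ (with a fixed outer Hamiltonian path $u_1 \cdots u_n$) and an associated diagonal graph on cutpoints $b_1,\dots,b_m$ with parameter $j$. I would construct a polygonal semi-arc representation realizing $G$: place $n$ concentric arcs with radii $1,\dots,n$, with $a_i \leftrightarrow u_i$, and choose the angular extents of the arcs so that (a) consecutive arcs $a_i,a_{i+1}$ always see each other, realizing the path, and (b) a diagonal edge $u_i u_k$ of $G$ is realized exactly when prescribed. The angular data is dictated by the diagonal-graph constraints: the interval property means each $b_i$ sees a contiguous block of outer vertices beyond it, the first/third bullets encode how these blocks are nested and aligned so that a single consistent radial "sightline pattern" exists. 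Concretely, I expect to build the representation greedily from the innermost arc outward, maintaining the invariant that the current angular configuration forces exactly the diagonal edges among already-placed cutpoints demanded by $G$, and using the fact that non-cutpoint vertices contribute only path edges (their arcs can be given a ``dip'' in argument, as in the planarity proof) to keep things flexible.

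The main obstacle will be the converse direction: verifying that the three somewhat intricate bullet conditions in the definition of a diagonal graph are \emph{exactly} what is needed — neither too weak (so that some ``diagonal graph'' is not realizable) nor too strong (so that some genuine semi-arc graph is excluded). In particular I expect the delicate point to be the interaction between where one cutpoint's sightlines end and where the next cutpoint's begin (the third bullet), since in the geometry this is governed by whether the argument of the blocking arc lies just inside or just outside the next cutpoint's arc, and one must check that the ``either $b_k$ or $b_{k+1}$'' clause captures precisely the two achievable cases. I would handle this by an explicit angular bookkeeping: assign to each arc $a_i$ an interval $[\alpha_i,\beta_i]$ of arguments and reduce all visibility conditions to a system of inequalities among the $\alpha$'s and $\beta$'s, then show this system is feasible if and only if the diagonal-graph axioms hold. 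The remaining steps — that the outer edges genuinely form an outerhamiltonian graph (already essentially done in Theorem~\ref{thm:arcplanar}), and that cutpoints of $G$ coincide with diagonal-edge emitters — are more routine structural arguments building directly on the planarity proof above.
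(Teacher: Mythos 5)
There is a genuine gap, and it starts with the decomposition you chose. The theorem's ``union of an outerhamiltonian graph and an associated diagonal graph'' is meant geometrically: the outerhamiltonian part consists of \emph{all} visibilities that do not pass through the center of the circle (this is a semi-bar visibility graph, hence outerhamiltonian by the cited theorem of Cobos et al.), and the diagonal part consists of the visibilities that \emph{do} pass through the center, which can only occur between certain cutpoints. You instead split the edge set combinatorially into the Hamiltonian-path edges $v_iv_{i+1}$ and ``everything else,'' and then assert that the vertices emitting edges $v_iv_k$ with $k>i+1$ are exactly the cutpoints. That is false: take three arcs from the common ray with arguments $0.5$, $0.3$, $0.6$ radians in order of increasing radius. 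The innermost and outermost arcs see each other over the middle one (no ray through the center is involved), so $v_1v_3$ is an edge skipping an index, yet $v_2$ is not a cutpoint and $v_1v_3$ belongs to the outerhamiltonian part, not the diagonal part. The three bulleted axioms of a diagonal graph are tailored to through-center visibilities among the nested ``locally maximal'' arcs $b_m,\dots,b_1$ (each chosen as the largest arc of smaller radius than the previous one); your bookkeeping of ``how far out successive cutpoints can see'' is aimed at the wrong edge set, so the forward direction as outlined does not establish the stated structure.

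The converse is also not carried out. The paper leans on the explicit construction of Cobos et al.\ for semi-bar representations of outerhamiltonian graphs (with all cutpoint bars made tallest and of equal height), then lengthens the selected cutpoint bars monotonically and wraps the picture into arcs so that $b_j$ lands at argument $\pi$; the diagonal-graph axioms are then verified to describe exactly the resulting through-center visibilities. You propose instead to build a representation greedily with a system of angular inequalities and to prove feasibility of that system is equivalent to the axioms --- but you explicitly flag this as the ``main obstacle'' and only ``expect'' it to work. As written this is a plan with an acknowledged hole rather than a proof; to repair it you should (i) redefine the two pieces as through-center versus not-through-center visibilities, (ii) import the semi-bar characterization for the latter, and (iii) give the explicit lengthening-and-wrapping construction (or your inequality system, fully solved) for the former.
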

\begin{proof}
We first show that a semi-arc visibility graph has the claimed form.

Cobos et al.~established~\cite[Theorem 4]{cobos1996} that a graph is a semi-bar visibility graph if and only if it is outerhamiltonian. Given a semi-arc visibility representation, visibilities not through the center of the circle form a semi-bar visibility graph. It therefore suffices to show that the visibilities through the center of the circle form an associated diagonal graph.

If no arc has argument greater than $\pi$, then there are no visibilities through the center and the diagonal graph is empty. Otherwise, consider the arc with the greatest argument. (If there is more than one such arc, take the innermost one.) The corresponding vertex must be a cutpoint, since the arc blocks all possible visibilities between arcs with larger radius and arcs with smaller radius. Call this arc and its corresponding vertex $b_m$. Consider all arcs with smaller radius than $b_m$, and label the vertex corresponding to the largest such arc $b_{m-1}$. (If there are multiple candidates, take the innermost one.) Continuing in this way, construct the sequence of arcs $b_m,\dots,b_1$. By the same reasoning as given above, each such arc must correspond to a cutpoint.

Consider the arcs in this sequence with argument at least $\pi$ radius, and let $b_j$ be the arc in this subset with smallest index. Notice that $b_1$ and $b_j$ can view each other through the center of the circle, but $b_1$ cannot view $b_i$ with $i<j$ through the center as $b_i$ has argument less than $\pi$ radians. For $1 \leq i \leq m$, if $b_i$ views $b_r$ and $b_\ell$, then it views $b_{k}$ through the center for all $r<{k}<\ell$ as $b_k$ has radius and argument in between those of $b_r$ and $b_\ell$. Finally, for $1 \leq i < j$, let $b_k$ be the element with the highest index in sequence $b_1,\ldots,b_m$ such that $b_i$ is connected through the center to $b_k$. If $b_i$ and $b_k$ radially share the same endpoint, then $b_{i+1}$ views $b_{k+1}$ but nothing earlier in the sequence as these arcs are blocked by $b_i$. Otherwise $b_{i+1}$ views $b_k$ but nothing earlier in the sequence, as these arcs are blocked by $b_i$. The visibilities through the center therefore form a diagonal graph.

In the other direction, consider an outerhamiltonian graph and associated diagonal graph. The construction of Cobos et al.~\cite{cobos1996} shows how to represent the outerhamiltonian graph as a semi-bar visibility graph. Moreover, their construction shows that we can take the bars corresponding to the cutpoints all be of the same height, taller than all other bars.

If the diagonal graph is empty, then this semi-bar visibility representation immediately corresponds to a semi-arc visibility representation upon embedding the bars in the upper half of the circle. If the diagonal graph is not empty, lengthen the bars corresponding to the selected cutpoints $b_1,\dots,b_m$ so that their lengths increase from left to right; if a cutpoint not belonging to the sequence $b_1,\dots,b_m$ lies between $b_i$ and $b_{i+1}$, give it the same length as $b_i$. This representation has the same visibilities as the original graph. Transform this representation into a semi-arc visibility representation by stretching the bars into arcs while maintaining their relative lengths, so that $b_j$ has argument $\pi$. Then it is easy to check that visibilities through the center form the given diagonal graph. The claim follows.
\end{proof}

\section{Improved edge bounds for arc and semi-arc $k$-visibility graphs}
\label{sec:edge}
Babbitt et al.~\cite{babbitt2013k} established upper bounds on the total number of edges for arc and semi-arc $k$-visibility graphs. For arc $k$-visibility graphs, they proved that a graph with $n$ vertices can have at most $(k+1)(3n-k-2)$ edges. We improve this bound to $(k+1)(3n-\frac{3k+6}{2})$. For semi-arc $k$ visibility graphs, they proved that a graph with $n$ vertices can have at most $(k+1)\left(2n-\frac{k+2}{2}\right)$ but conjectured that the correct bound was smaller. We prove that in fact their original bound is tight. These results are summarized in Table~\ref{table:edge_bounds}.

\begin{table}
\begin{center}
	\begin{tabular}{@{}lcc@{}}\toprule
	&arc&semi-arc\\ \midrule
	Babbitt et al. &$\le(k+1)(3n-k-2)$ &$\le(k+1)\left(2n-\frac{k+2}{2}\right)$\\
	This Work & $\le(k+1)\left(3n-\frac{3k+6}{2}\right)$&\quad$(k+1)\left(2n-\frac{k+2}{2}\right)$\\
	\bottomrule
	\end{tabular}
	\caption{Maximum number of edges in arc and semi-arc $k$-visibility graphs on $n$ vertices.}
	\label{table:edge_bounds}
\end{center}
\end{table}

Since we seek to establish upper bounds on the number of edges, we assume in this section that each arc has a different radius and moreover that no two endpoints of any two arcs lie on the same radial segment. We can accomplish this without decreasing the number of edges by slightly perturbing arcs and their endpoints.

We begin by establishing some definitions. Fix an arc $k$-visibility representation of a graph and consider two arcs joined by some line of sight. Consider the set of all valid lines of sight between the two arcs. Each such line can be uniquely associated with an argument $\theta \in (- \pi, \pi]$ denoting the angle that line makes with the positive $x$-axis. 

Two lines of sight are \emph{contiguous} if we can rotate one into the other such that all intervening segments are also valid lines of sight, and we define a \emph{region of visibility} to be the closure of a maximal set of contiguous lines of sight. For each region of visibility, we call the radial segment in it with the smallest argument the \emph{limiting line} of the region.

Following Babbitt et al.~\cite{babbitt2013k}, we associate edges in an arc $k$-visibility graph $G$ with arcs according to the limiting lines of their regions of visibility. (Our definitions differ from theirs in that we allow an edge to be associated with multiple arcs.)

Fix an arc $k$-visibility representation of $G$. Suppose that two arcs $a_u$ and $a_v$ in the representation are connected by a line of sight, so that the corresponding vertices $u$ and $v$ are connected by an edge in $G$. We consider each region of visibility between $a_u$ and $a_v$ in turn. Given a region, if the limiting line contains an endpoint of $a_u$ (respectively $a_v$), then we call the edge between $u$ and $v$ in $G$ a \emph{negative edge} of $a_u$ (respectively $a_v$). Otherwise, the limiting line must contain the endpoint of another arc, say $a_w$. In this case, we call the edge between $u$ and $v$ a \emph{positive edge} of $a_w$. In this way, we associate each edge to (possibly many) arcs in the visibility representation.

In Figure~\ref{fig:posneg}, we give an example showing several limiting lines corresponding to regions of visibility in an arc $k$-visibility graph. In each case, the edge is assigned to the arc whose endpoint is contained in the limiting line.

\begin{figure}
\begin{center}
\includegraphics[width=.17\textwidth]{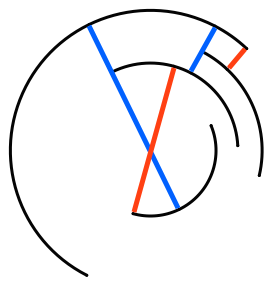}
\caption{Positive (red) and negative (blue) edges}
\label{fig:posneg}
\end{center}
\end{figure}

The following lemma establishes a link between the number of regions of visibility between two arcs and the number of arcs that the corresponding edge is assigned to.

\begin{lemma}\label{lem:regions}
Fix a pair of arcs $a_u$ and $a_v$ in an arc $k$-visibility representation corresponding to a pair of connected vertices $u$ and $v$ in $G$. Suppose that the arcs have $m$ distinct regions of visibility. Then the edge between $u$ and $v$ is assigned to at least $m$ arcs.
\end{lemma}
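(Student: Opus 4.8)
The plan is to show that each of the $m$ regions of visibility contributes a \emph{distinct} arc to which the edge $uv$ is assigned, so that the total count is at least $m$. Fix the region of visibility $R$ and its limiting line $\ell_R$, the radial segment in $R$ with smallest argument. By the definition of a region of visibility as the closure of a maximal set of contiguous lines of sight, rotating past $\ell_R$ (in the direction of decreasing argument) must destroy the line of sight; since all arcs have distinct radii and no two endpoints of any two arcs lie on the same radial segment, the only way a line of sight can be destroyed by an infinitesimal rotation is that $\ell_R$ passes through an endpoint of some arc — either an endpoint of $a_u$ or $a_v$ (giving a negative edge) or an endpoint of a third arc $a_w$ (giving a positive edge). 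This is exactly the arc $\alpha(R)$ to which the edge $uv$ gets assigned on account of the region $R$. So each region $R$ yields one arc $\alpha(R)$, and it remains to argue $R \mapsto \alpha(R)$ is injective.

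For injectivity I would argue by contradiction: suppose two distinct regions of visibility $R$ and $R'$, with limiting lines $\ell_R$ and $\ell_{R'}$ at arguments $\theta < \theta'$, are both assigned to the same arc $a$. The key point is that the relevant endpoint of $a$ lies on $\ell_R$ and also on $\ell_{R'}$, but an arc has only two endpoints and, by the perturbation assumption, those two endpoints lie at distinct arguments and not at diametrically opposite arguments either (we may also perturb to ensure this, or note it follows from distinct radii for the through-the-center case). Hence if $a = a_w$ contributes a positive edge to both $R$ and $R'$, the same endpoint of $a_w$ would have to lie on two radial segments of different argument $\theta \ne \theta'$ — a contradiction, since a single point determines a unique radial segment through the origin (here one must handle the through-the-center case, where a radial \emph{segment} may extend on both sides of the origin, by observing that the two endpoints of $a_w$ still cannot both be the blocking endpoint at two different regions; this is where a little care is needed). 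The case where $a$ is $a_u$ or $a_v$ is identical: the negative endpoint of $a_u$ can lie on at most one limiting line. Therefore $\alpha$ is injective and the edge is assigned to at least $m$ arcs.

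The main obstacle I anticipate is the bookkeeping around lines of sight that pass through the center of the circle, because there a "radial segment" is really a full diameter and an obtuse arc can be intersected twice; I would need to invoke the paper's stated convention that such double intersections are counted once, and check that the limiting-line argument still pins down a unique endpoint. A secondary subtlety is making sure that "rotating past the limiting line destroys the line of sight" genuinely forces an endpoint onto $\ell_R$ rather than, say, the line of sight merely becoming tangent to an arc — but tangency cannot occur generically after perturbation, and in any case a radial line tangent to a circular arc meets it at an endpoint, so this does not actually arise. Once these two points are dispatched, the counting argument is immediate.
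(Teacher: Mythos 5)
Your proof takes essentially the same route as the paper's (very short) argument: each region's limiting line contains the endpoint of some arc, and distinct regions must be charged to distinct arcs because a given endpoint determines its limiting line --- the paper phrases this as the endpoint lying in its own region of visibility and hence in no other. The one point where your injectivity step is looser than you admit --- ruling out the two regions $R$ and $R'$ being charged to the \emph{two different} endpoints of the same arc $a_w$, rather than to the same endpoint twice --- is elided at exactly the same level in the paper's proof, so your proposal matches it in both approach and rigor.
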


\begin{proof}
It suffices to show that the limiting line of each region of visibility corresponds to a distinct arc. To see this, note that each assignment corresponds to the endpoint of some arc, and that endpoint cannot lie in any other region of visibility between $a_u$ and $a_v$.

Hence different regions of visibility correspond to distinct assignments of edges to arcs. The claim follows.
\end{proof}
\subsection{An improved bound for arc $k$-visibility graphs}
Using Lemma~\ref{lem:regions} it is possible to improve the bound on the maximum number of edges given in Babbitt et al.~\cite{babbitt2013k} for arc $k$-visibility graphs.
\begin{theorem}\label{thm:arcbound}
The maximum number of edges in an arc $k$-visibility graph with $n$ vertices is at most $\dbinom{n}{2}$ for $n\le{4k+4}$ and $(k+1)(3n-\frac{3k+6}{2})$ for $n>4k+4$.
\end{theorem}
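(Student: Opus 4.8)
The plan is to bound the number of edges in an arc $k$-visibility graph by a careful accounting argument based on the edge-to-arc assignment described above. For $n \le 4k+4$ the claimed bound $\binom{n}{2}$ is trivial since the graph is simple, so the substance is the range $n > 4k+4$. Each edge of $G$ is assigned, via the limiting lines of its regions of visibility, to one or more arcs, as either a positive or negative edge of that arc. By Lemma~\ref{lem:regions}, an edge with $m$ regions of visibility is assigned to at least $m$ arcs; conversely, if we can show that each arc can be charged at most $N^+$ positive edges and at most $N^-$ negative edges, then summing over arcs gives $\sum_{e} (\text{number of arcs } e \text{ is assigned to}) \le n(N^+ + N^-)$, and since the left side is at least the number of edges $|E(G)|$ (each edge is assigned to at least one arc), we get $|E(G)| \le n(N^+ + N^-)$. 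The main work is to compute the right values of $N^+$ and $N^-$ and then to shave off a lower-order term by a more delicate argument near the innermost arcs.

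First I would analyze the negative edges of a fixed arc $a_w$. A negative edge of $a_w$ arises from a region of visibility between $a_w$ and some other arc whose limiting line passes through an endpoint of $a_w$ — either the positive endpoint or the negative endpoint. Rotating a radial ray away from (say) the positive endpoint of $a_w$ in the direction of increasing argument, the ray sweeps across at most $k+1$ arcs before being blocked (the $k$ it is allowed to pass through plus the one that finally blocks it), so the limiting lines at the positive endpoint of $a_w$ account for at most $k+1$ distinct partner arcs, hence at most $k+1$ negative edges; the same holds at the negative endpoint, but we must be careful not to double-count, and to handle the arcs whose argument exceeds $\pi$ (where a radial ray through the center can see arcs "on the other side"). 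A similar sweep argument bounds the positive edges of $a_w$: a positive edge of $a_w$ has its limiting line through an endpoint of $a_w$ but connects two arcs $a_u, a_v$ on opposite sides of that endpoint along the ray, and counting how many such pairs straddle the endpoint within the allowed depth $k$ gives the bound $2(k+1)$ or so. Combining, each arc is charged $O(k+1)$ edges, and tracking the constants precisely yields the coefficient $3(k+1)$ of $n$ in the stated bound.

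The delicate part — and the main obstacle — is extracting the lower-order correction $-(k+1)\cdot\frac{3k+6}{2}$, i.e.\ showing the bound is $(k+1)(3n - \frac{3k+6}{2})$ rather than just $3(k+1)n$. This requires observing that the innermost arcs cannot realize the full charge: an arc $a_i$ with only $i-1$ arcs of smaller radius can have its visibility rays blocked "early" simply because there aren't enough arcs inside it, so for small $i$ the count of positive/negative edges assigned to $a_i$ is governed by $i$ rather than by $k+1$. Summing the true per-arc bound $\min(\text{something}(i), 3(k+1))$ over $i = 1, \dots, n$ produces a telescoping/arithmetic-series deficit of order $(k+1)^2$ relative to the naive $3(k+1)n$, and pinning down the exact constant $\frac{3k+6}{2}$ is where the careful bookkeeping lives. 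I would organize this by first proving a clean per-arc lemma of the form "arc $a_i$ is assigned at most $f(i,k)$ edges" with an explicit piecewise-linear $f$, then summing; the threshold $n = 4k+4$ is exactly where the two regimes ($\binom{n}{2}$ versus the linear bound) cross over, which serves as a consistency check on the constants.

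One subtlety I would flag explicitly in the write-up is the convention, noted in the preliminaries, that a radial line meeting an obtuse arc twice counts that arc only once; this matters in the sweep arguments because an obtuse arc can "reappear" as the ray rotates, and the counting of how many arcs a ray passes through before being blocked must respect this convention. I would also need the genericity assumptions made at the start of the section (distinct radii, no two endpoints radially aligned) so that limiting lines are well-defined and pass through exactly one endpoint, which is what makes the assignment of edges to arcs unambiguous.
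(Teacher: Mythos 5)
Your setup---the positive/negative edge charging via limiting lines, the genericity assumptions, and the per-arc charge of $3(k+1)$ (at most $2k+2$ negative plus $k+1$ positive edges per arc)---is the right starting point, and carried out carefully it recovers the earlier bound $(k+1)(3n-k-2)$ of Babbitt et al. But the content of the theorem is the further improvement of the subtracted term from $k+2$ to $\tfrac{3(k+2)}{2}$, and there your proposal has a genuine gap. First, you locate the deficit at the \emph{innermost} arcs (``there aren't enough arcs inside it''), but that is the wrong place: an innermost arc can still acquire a full complement of negative edges via rays exiting outward and via rays through the center, and it can even receive positive edges from pairs of larger arcs that see each other through the center. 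The deficit that can actually be established by per-arc counting lives at the \emph{outermost} $k+1$ arcs, and that is exactly how Babbitt et al. already obtain their $-(k+1)(k+2)$ correction; your route therefore yields at best their bound, not the stronger one. Second, you explicitly defer ``pinning down the exact constant'' to unspecified bookkeeping, but no refinement of the form $\min(f(i),3(k+1))$ summed over arcs produces the additional savings of $\sum_{\ell=1}^{k+1}\ell=\tfrac{(k+1)(k+2)}{2}$ that separates the two bounds.

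The paper extracts that extra savings by a different idea: inductively replace the outermost $k+1$ arcs by full circles, from the outside in. When $a_\ell$ is closed into a circle, each of $\ell$ suitably chosen arcs in its ``cone of visibility'' either gains a new positive or negative visibility (so the original configuration was missing an edge relative to the naive count), or else radial rays from both of its endpoints already hit $a_\ell$, in which case it has two distinct regions of visibility with $a_\ell$ and, by Lemma~\ref{lem:regions}, that single edge is assigned to at least two arcs and hence counted twice in the naive tally. This yields an over-count of at least $\ell$ attributable to $a_\ell$, and summing over $\ell=1,\dots,k+1$ gives the extra $\tfrac{(k+1)(k+2)}{2}$. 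This mechanism---and in particular the use of Lemma~\ref{lem:regions} to convert ``two regions of visibility'' into ``one edge double-counted''---is precisely what your proposal is missing; you cite the lemma only in passing and never deploy it.
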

\begin{proof}
When $n \leq 4k + 4$ the bound is trivial, so we can assume that $n > 4k+4$.

Label the outermost $k+1$ arcs $a_{k+1},\ldots,a_1$ with indices decreasing with decreasing radius. We first recall how to obtain the bound given in Babbitt et al.~\cite{babbitt2013k}. Note that there are at most $2k+2$ negative edges and $k+1$ positive edges associated to each arc. Moreover, for the outermost arcs a stronger bound holds: there are at most $0,1,\ldots,k$ positive edges and $k+1,k+2,\ldots,2k+1$ negative edges for $a_{k+1},\ldots,a_1$ respectively. The total number of edges is therefore at most $(3k+3)n - 2\sum_{i=1}^{k+1} i = (k+1)(3n - k - 2)$.
\begin{figure}
\centering
\begin{minipage}{.13\linewidth}
\includegraphics[width=\linewidth]{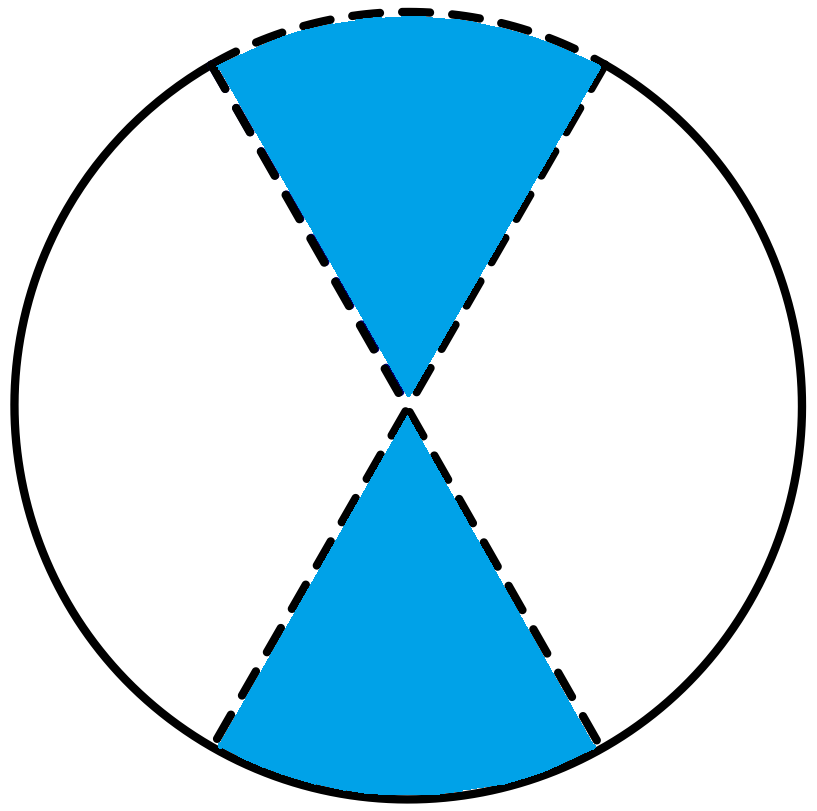}
\end{minipage}
\hspace{.05\linewidth}
\begin{minipage}{.13\linewidth}
\includegraphics[width=\linewidth]{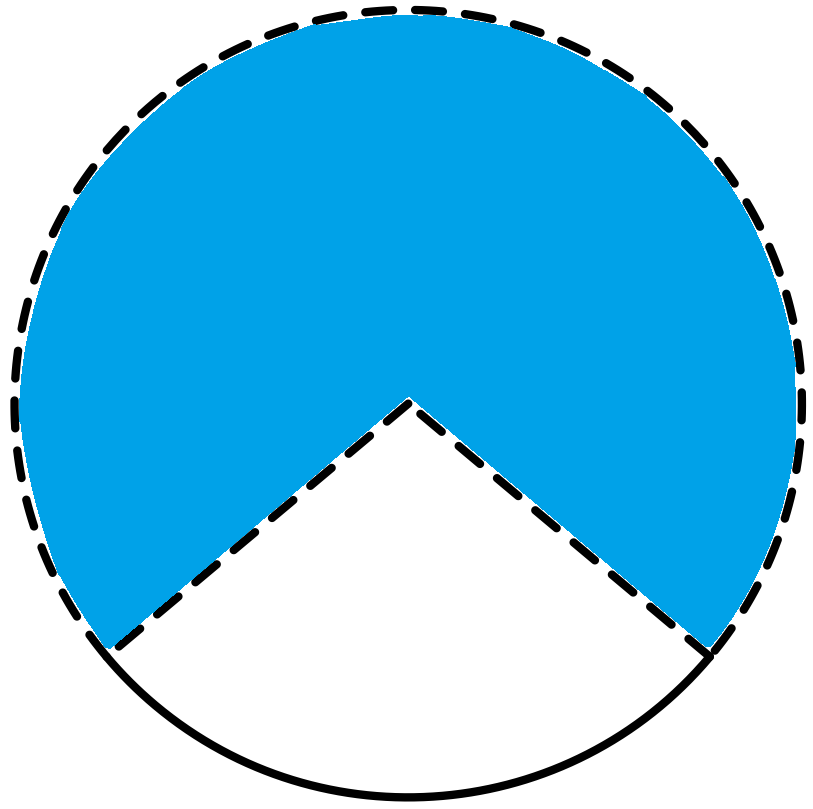}
\end{minipage}
\caption{Cone of Visibility}
\label{fig:cone}
\end{figure}
Note that the above bound still holds if we replace the $k+1$ outermost arcs by circles. We will replace the outermost by circles inductively from the outside in, and if this process adds to the total number of edges then we will have shown that the original bound was not tight.

Consider the arc $a_\ell$, and suppose that we have already closed the $k+1-\ell$ outermost arcs into circles. (If $\ell = k+1$, then we simply consider the original represenation.) Say an arc $a$ is in the \emph{cone of visibility} of $a_\ell$ if there is a radial line of sight from $a$ to the exterior of the circle on which $a_\ell$ lies that does not intersect $a_\ell$. (In Figure~\ref{fig:cone}, any arc in the blue shaded region is in the cone of visibility of the outer arc.)

Suppose that there are $m$ arcs contained in the cone of visibility. Consider arcs in the order shown in Figure~\ref{fig:closer}, where the two different cases correspond to whether $a_\ell$ is an obtuse or acute arc, and take the $\ell$ arcs encountered last in moving from the tail to the head of the arrow. (If $m < \ell$, take all the arcs.) Call this set of arcs $S_\ell$. It is easy to see that all arcs in $S_\ell$ have a valid line of sight to the outermost circle passing through at most $k-1$ arcs or circles.

Consider any arc $a$ in $S_\ell$. Suppose there is a radial line passing through the negative endpoint of $a$ that does not intersect $a_\ell$. Then when we close the arc $a_\ell$ into a circle, we add a negative edge to $a$. Since $a$ had a line of sight to the outermost circle passing through at most $k-1$ arcs before this operation, $a$ still has a line of sight to the outermost circle. Therefore this operation has added a negative visibility to $a$ without removing any visibilities of $a$, so the original graph was missing this visibility.

In a similar way, if there is a radial line passing through the positive endpoint of $a$ but not intersecting $a_\ell$, then the original graph was missing a positive visibility associated with $a$. 

Finally, if radial lines from both endpoints of $a$ intersect $a_\ell$, then $a$ and $a_\ell$ have at least two distinct regions of visibility. Lemma~\ref{lem:regions} then implies that the edge between $a$ and $a_\ell$ is assigned to at least $2$ arcs.

The proceeding considerations show that each arc in $S_\ell$ is associated with at least $1$ extra visibility, either one that is missing in the original graph or is counted twice in the original bound. Moreover, if $|S_\ell| < \ell$, then $a_\ell$ itself is missing at least $\ell - |S_\ell|$ positive visibilities. In any case, we obtain that $a_\ell$ is associated with an over-counting of at least $\ell$ visibilities. Repeating this process for all $\ell$ with $1 \leq \ell \leq k+1$ yields a total over-count of $\sum_{\ell = 1}^{k+1} \ell = (k+1)\left(\frac{k+2}{2}\right)$. We therefore obtain that the maximum number of edges is $$(k+1)\left(3n - k - 2 - \frac{k+2}{2}\right) = (k+1)\left(3n-\frac{3k+6}{2}\right)\,,$$ as desired.

\begin{figure}
\centering
\begin{minipage}{.13\linewidth}
   \includegraphics[width=\linewidth]{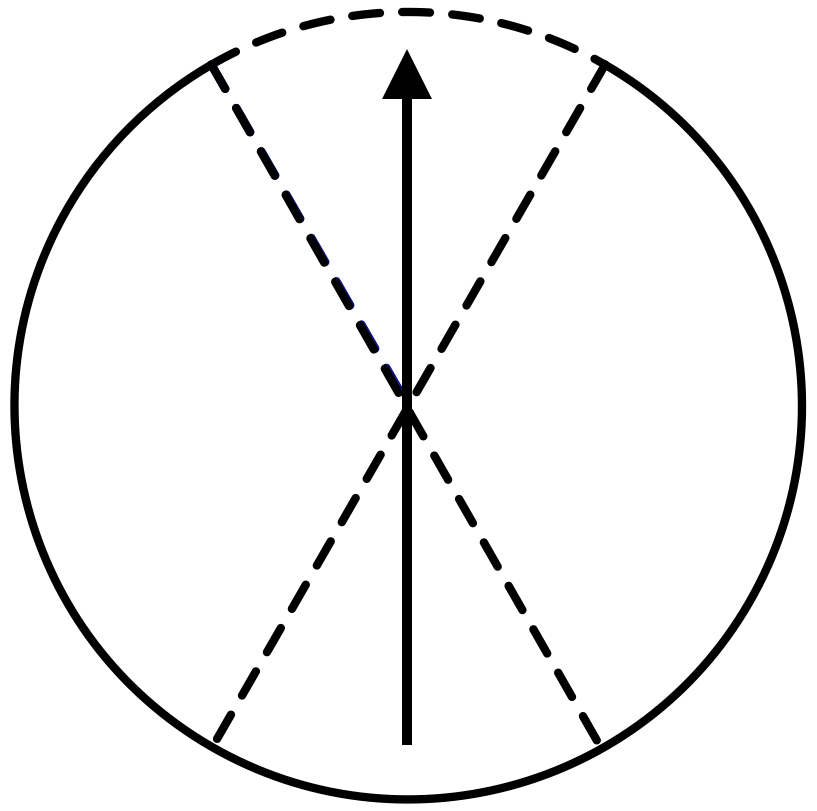}
\end{minipage}
\hspace{.05\linewidth}
\begin{minipage}{.13\linewidth}
  \includegraphics[width=\linewidth]{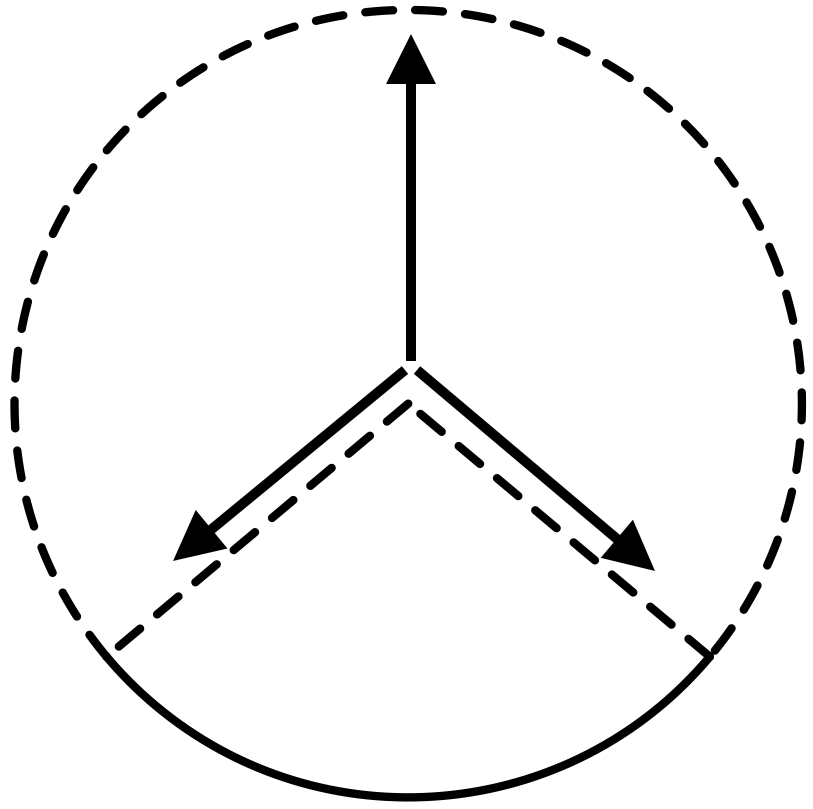}
\end{minipage}
\caption{Definition of $S_\ell$ for obtuse and acute arcs}
\label{fig:closer}
\end{figure}

\end{proof}
\begin{corollary}
\label{cor:tightarcedge}
The maximum number of edges in an arc visibility graph with $n$ vertices is $\dbinom{n}{2}$ for $n\le5$ and $3n-3$ for $n\ge6$. This bound can be achieved as shown in Figure~\ref{fig:arcmax} with the dots indicating any necessary additional arcs. (If $n<5$ take the innermost $n$ arcs in Figure~\ref{fig:arcmax}.)
\end{corollary}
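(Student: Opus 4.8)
The proof splits into a matching upper bound and construction. For the upper bound I would simply specialize Theorem~\ref{thm:arcbound} to $k=0$, which gives at most $\binom{n}{2}$ edges when $n\le 4$ and at most $3n-3$ edges when $n\ge 5$, and intersect this with the bound $\binom{n}{2}$ valid for every $n$-vertex graph. Since $\binom{n}{2}-(3n-3)=\tfrac{(n-1)(n-6)}{2}$, the binomial bound is the smaller of the two for $n\le 5$ and the bound $3n-3$ is the smaller for $n\ge 6$ (the two coincide at $n=6$), which is exactly the claimed maximum.

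The substance of the statement is the construction, namely the arc visibility representation in Figure~\ref{fig:arcmax}. I would first record a reduction that legitimizes the ``innermost $n$ arcs'' device: if $a_n$ is an arc of maximal radius, then every line of sight between two arcs of smaller radius --- whether or not it passes through the center --- is contained in the closed disk of radius $r_{n-1}<r_n$, so $a_n$ can neither block such a line of sight nor contain one of its endpoints; hence deleting $a_n$ yields the induced subgraph on the remaining vertices. Iterating, the visibility graph of the innermost $m$ arcs of any representation is precisely the induced subgraph on the first $m$ vertices. Applied to Figure~\ref{fig:arcmax}, whose innermost five arcs form a representation of $K_5$ (as found by Babbitt et al.~\cite{babbitt2013k}), this immediately realizes $K_n$, with $\binom{n}{2}$ edges, for $n\le 5$, and one further checks that the innermost six arcs realize $K_6$, which has $15=3\cdot 6-3$ edges.

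For $n\ge 7$ I would then proceed by induction on the number of arcs, the base case $n=6$ being the $K_6$ just obtained. For the inductive step one verifies, using the angular data of the figure, that adjoining the next (outermost) arc $a_i$ creates exactly three new edges --- in the language of Section~\ref{sec:edge}, its negative endpoint becomes the limiting line of two regions of visibility and its positive endpoint the limiting line of a third, and no other new line of sight appears --- so the representation on the innermost $i$ arcs has $15+3(i-6)=3i-3$ edges; taking $i=n$ completes the proof.

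The main obstacle is precisely this last verification. Because for $n\ge 7$ the target $3n-3$ is only one unit below the $3n-2$ that the cruder counting in the proof of Theorem~\ref{thm:arcbound} already guarantees, the representation must be laid out so that essentially every arc is ``fully loaded,'' and the delicate points are (i) ruling out any accidental extra sightline, in particular one through the center created by an obtuse arc, and (ii) confirming that the number of new edges is exactly $5$ when the sixth arc is added and exactly $3$ each time thereafter. Once the arguments of the arcs in Figure~\ref{fig:arcmax} are pinned down this becomes a routine, if slightly tedious, case check, and the small cases $n\le 6$ reduce to the known realizations of $K_5$ and $K_6$.
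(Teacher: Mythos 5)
Your proposal is correct and follows essentially the same route as the paper, which gives no written proof beyond specializing Theorem~\ref{thm:arcbound} to $k=0$ (intersected with the trivial $\binom{n}{2}$ bound, which is the binding one exactly for $n\le 5$) and pointing to the construction in Figure~\ref{fig:arcmax}. Your additional observations --- that deleting an outermost arc yields the induced subgraph, and that the construction should be verified incrementally ($K_5$, then $K_6$, then three new edges per added arc) --- are sound and, if anything, supply more justification than the paper records.
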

\begin{figure}
\begin{center}
\includegraphics[width=.27\textwidth]{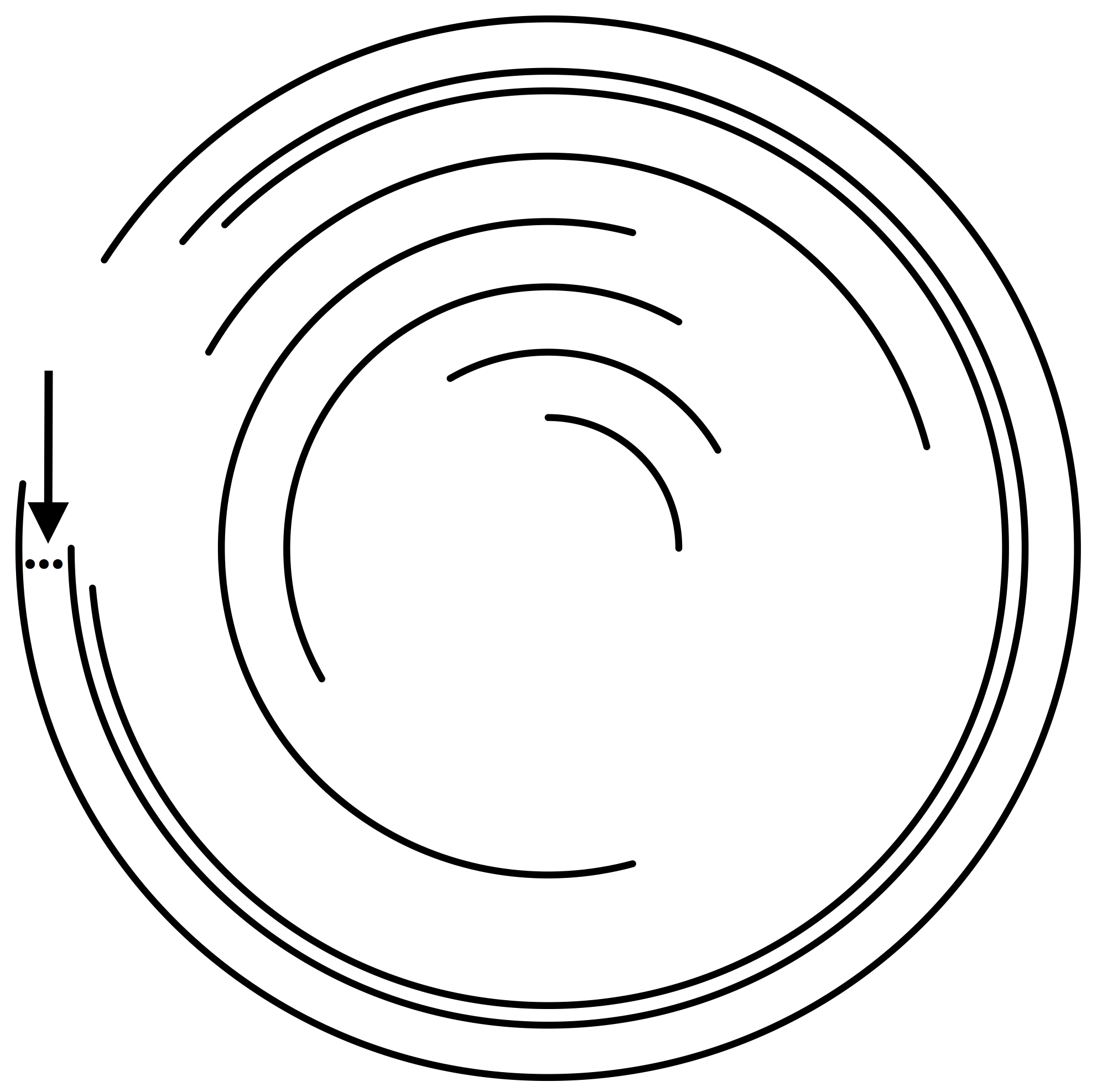}
\caption{Arc visibility representation with maximum number of edges}
\label{fig:arcmax}
\end{center}
\end{figure}
\subsection{A tight construction for semi-arc $k$-visibility graphs}
As noted above, the edge bound previously given in Babbitt et el.~\cite{babbitt2013k} for semi-arc $k$-visibility graph is actually optimal. By establishing optimality, we disprove their Conjecture 20, which posited a smaller upper bound.
\begin{theorem}\label{thm:semiarck_bound}
The maximum number of edges in a semi-arc $k$-visibility graph with $n$ vertices is $(k+1)\left(2n-\frac{k+2}{2}\right)$ for $n\ge5k+5$ and this bound is optimal.
\end{theorem}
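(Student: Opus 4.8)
Since Babbitt et al.\ have already established the upper bound $(k+1)\bigl(2n-\frac{k+2}{2}\bigr)$, the only thing to prove is that it is attained, so the plan is to exhibit, for every $n\ge 5k+5$, a semi-arc $k$-visibility representation on $n$ arcs with exactly this many edges. I would build it from $n$ arcs $a_1,\dots,a_n$ with strictly increasing radii and strictly increasing arguments $\gamma_1<\dots<\gamma_n$, arranged in two blocks: a block of ``low'' arcs whose arguments are small positive numbers, and a block of ``high'' arcs whose arguments exceed $\pi$ by only a small amount, with $q\approx n/2$ arcs in the low block. The delicate design choice is the fine structure of the offsets: the low arguments, and the high arguments with $\pi$ subtracted, should be chosen to interleave in a prescribed pattern, so that for a low arc $a_a$ and a high arc $a_b$ the gap $\gamma_b-\gamma_a$ crosses the value $\pi$ exactly as the index gap $b-a$ runs through a target interval of length $2(k+1)$ centred near $n/2$.

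With such a representation in hand, I would first treat the radial lines of sight that do \emph{not} pass through the center. Because the arguments are monotone, an inner arc $a_i$ and an outer arc $a_j$ with $i<j$ admit such a line of sight (taken at an angle just below $\gamma_i$) if and only if the $j-i-1$ arcs strictly between them number at most $k$, i.e.\ $j-i\le k+1$; thus these visibilities realize exactly the $(k+1)$st power of a path and contribute $(k+1)\bigl(n-\frac{k+2}{2}\bigr)$ edges — the maximum attainable by a semi-bar $k$-visibility graph. Next I would treat the lines of sight through the center. For monotone arguments only one of the two orientations of a diametral line can join two arcs that are far apart in radius; analyzing when such a diameter exists (pushing the limiting angle as large as the two endpoints and the intervening arcs allow) should show that a low arc $a_a$ is joined through the center to a high arc $a_b$ precisely when $b$ lies in a block of $2(k+1)$ consecutive indices dictated by the interleaving pattern, that no two low arcs are joined through the center, and that two high arcs are joined through the center only when they sit next to the low/high boundary. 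Summing the windows over the $q\approx n/2$ low arcs produces roughly $(k+1)n$ further edges, and since the index gaps involved are near $n/2>k+1$ these are disjoint from the path-power edges, so the two contributions simply add.

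The real work — and the step I expect to be the main obstacle — is turning this asymptotic count into an exact one. Near the two ends of the low block the windows are truncated (a predicted center-neighbour may exceed $n$, or fall at or below the first high index), and a bounded number of extra center visibilities appear among high arcs adjacent to the low/high boundary; one must verify that all of these boundary contributions cancel exactly, leaving precisely $(k+1)n$ center edges, so that the total is $(k+1)\bigl(n-\frac{k+2}{2}\bigr)+(k+1)n=(k+1)\bigl(2n-\frac{k+2}{2}\bigr)$. This is exactly where the hypothesis $n\ge 5k+5$ is needed: it forces both blocks to contain at least about $2(k+1)$ arcs with room to spare, so that every low arc in the interior genuinely has its full set of $2(k+1)$ center-neighbours and the boundary corrections are exactly the ones one computes. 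I anticipate that pinning down the interleaving pattern and checking these endpoint terms — paying attention to the parity of $n$, which shifts the target window by one — will be the fussiest part, while the geometric core (monotone arguments give a path power directly and a second, shifted band of visibilities through the center) is routine. Finally, since double-intersections of a diameter with an obtuse high arc are counted only once by convention, I would note that this convention is never violated by the construction because each relevant diameter meets any given arc on at most one side of the center.
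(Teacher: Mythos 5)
Your reduction to exhibiting a tight construction is right (the upper bound is indeed just Theorem~13 of Babbitt et al.), and your count of the off-center edges is correct: with arguments strictly increasing in radius these form exactly the $(k+1)$st power of a path, contributing $(k+1)\bigl(n-\frac{k+2}{2}\bigr)$ edges. But the heart of the theorem is producing exactly $(k+1)n$ additional center edges, and this is precisely the step you defer (``the main obstacle''), hoping the boundary corrections ``cancel exactly.'' They do not appear to, and I believe your two-block architecture cannot be repaired. In a low/high two-block arrangement, a low arc $a_i$ sees a high arc $b_j$ through the center only when either (a) $\delta_j\ge\gamma_i$ and at most $k$ of $\delta_1,\dots,\delta_{j-1}$ exceed $\gamma_i$, or (b) $\delta_j<\gamma_i$ and at most $k$ of $\gamma_1,\dots,\gamma_{i-1}$ exceed $\delta_j$ (here $\gamma_i$ are the low arguments and $\pi+\delta_j$ the high ones). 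Summing these windows, getting the full $(k+1)n$ would force both $\gamma_q<\delta_{q'-k}$ and $\delta_{q'}<\gamma_{q-k}$ simultaneously, which is contradictory; with the balanced interleaving the low--high deficit is $(k+1)^2$, and the only compensating high--high center visibilities that are not already path-power edges number $\binom{k+2}{2}$, leaving a net shortfall of $\binom{k+1}{2}$ for every $k\ge1$. So the truncation you flag as ``fussy'' is in fact fatal to the two-block design, not merely tedious.

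The paper avoids this entirely by using \emph{five} blocks of $k+1$ arcs each, with arguments clustered near $\frac{\pi}{5},\frac{3\pi}{5},\pi,\frac{7\pi}{5},\frac{9\pi}{5}$ (each block's $k+1$ arcs offset by tiny increments). The point of five blocks is that center visibility then becomes an all-or-nothing relation between block pairs: exactly five pairs of blocks see each other fully through the center, giving $5(k+1)^2=(k+1)(5k+5)$ center edges with no boundary truncation at all, on top of the same path-power count you derived. Larger $n$ is then handled not by rescaling the whole construction but by inserting $n-5k-5$ short arcs (argument below that of the innermost arc) between the third and fourth blocks, each contributing exactly $2k+2$ new off-center edges and disturbing nothing else. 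If you want to salvage your approach, you should look for a block structure in which every center adjacency is realized at full multiplicity $(k+1)^2$ rather than through a sliding window.
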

\begin{proof}
\par{The maximum number of edges in a semi-arc $k$-visibility graph with $n$ vertices is at most $(k+1)\left(2n-\frac{k+2}{2}\right)$ for $n\ge3k+3$. This is Theorem 13 in Babbitt et al.~\cite{babbitt2013k}.}
\begin{figure}
\begin{center}
\includegraphics[width=.30\textwidth]{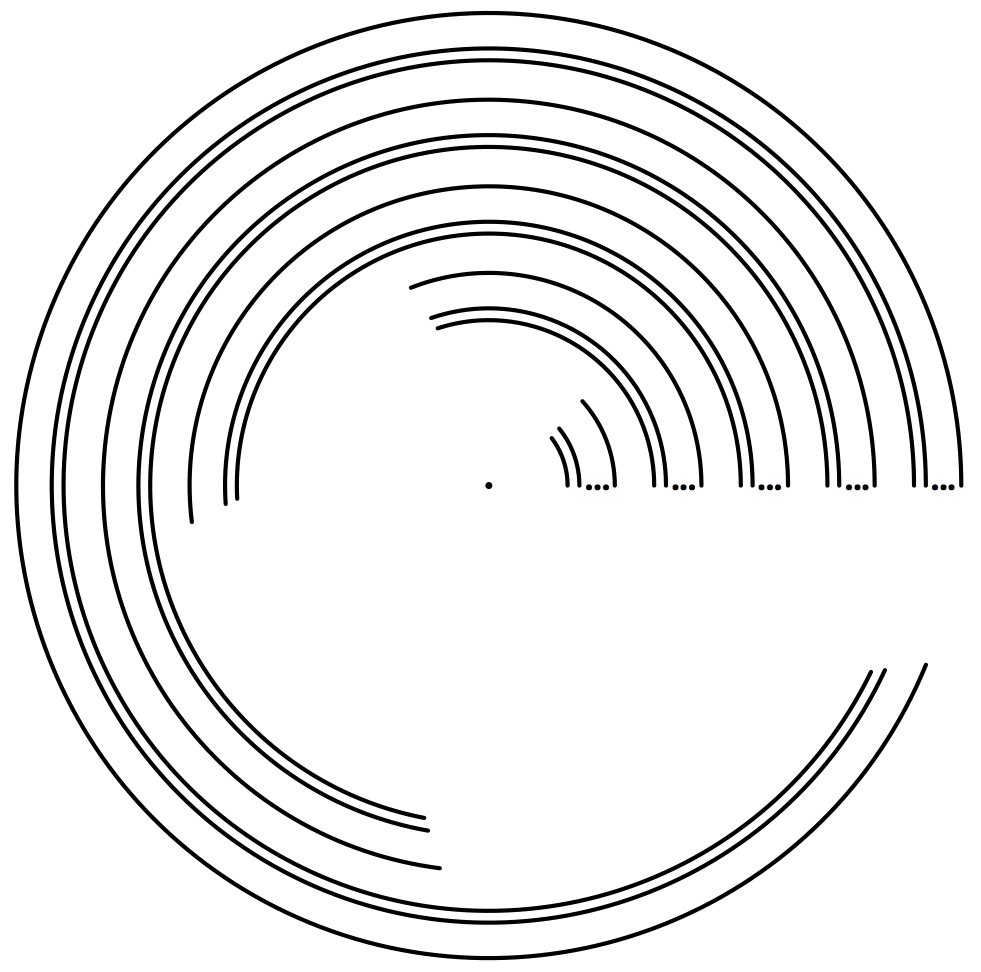}
\caption{Semi-arc $k$-visibility representation with $5k + 5$ arcs and the maximum number of edges (Each set of arcs has $k+1$ arcs.)}
\label{fig:semimax}
\end{center}
\end{figure}
We claim that the semi-arc $k$-visibility representation in Figure~\ref{fig:semimax} proves the claim for $n=5k+5$. Let the arcs be marked $a_1,\ldots,a_{5k+5}$ with indices increasing with increasing radius in Figure 12. (The arcs $a_1,\ldots,a_{5k+5}$ have arguments of $\frac{\pi}{5},\frac{\pi}{5}+\epsilon,...,\frac{\pi}{5}+k\epsilon,\frac{3\pi}{5},...,\frac{3\pi}{5}+k\epsilon,\pi,...,\pi+k\epsilon,\frac{7\pi}{5},...\frac{7\pi}{5}+k\epsilon,\frac{9\pi}{5},...,\frac{9\pi}{5}+k\epsilon$ radians respectively for $\epsilon$ sufficiently small.) This semi-arc $k$-visibility representation gives a total of $(k+1)\left(2(5k+5)-\frac{k+2}{2}\right)$ edges as there are $5(k+1)^2$ edges corresponding to visibilities through the center and $(k+1)\left(\frac{9k+8}{2}\right)$ edges corresponding to visibilities not through the center. This establishes the desired edge count for $n=5k+5$. For $n>5k+5$, add an additional $n-5k-5$ arcs between $a_{3k+3}$ and $a_{3k+4}$ that have an argument less than that of $a_1$. Notice that each new arc adds $2k+2$ edges so the bound is optimal for $n>5k+5$.
\end{proof}

Babbitt et al.~\cite{babbitt2013k} also conjectured that the complete graph $K_{3k+4}$ is not a semi-arc $k$-visibility graph. Using a construction similar to the one given above, we disprove this conjecture as well.
\begin{theorem}
\label{thm:3k4}
$K_{3k+4}$ is a semi-arc $k$-visibility graph. 
\end{theorem}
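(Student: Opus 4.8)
The plan is to exhibit an explicit semi-arc $k$-visibility representation of $K_{3k+4}$, guided by the construction in the proof of Theorem~\ref{thm:semiarck_bound}. The key idea is that $K_{3k+4}$ has $\binom{3k+4}{2}$ edges, and a quick check shows $\binom{3k+4}{2} = \frac{(3k+4)(3k+3)}{2} = (k+1)\cdot\frac{3(3k+4)}{2}$; meanwhile the Babbitt et al.\ edge bound for $n = 3k+4$ vertices evaluates to $(k+1)\bigl(2(3k+4) - \frac{k+2}{2}\bigr) = (k+1)\cdot\frac{11k+12}{2} > (k+1)\cdot\frac{9k+12}{2}$, so $K_{3k+4}$ is not excluded by the edge count alone. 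This means we are looking for a representation that is simultaneously ``dense through the center'' and ``dense not through the center.''

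Concretely, I would split the $3k+4$ arcs into three blocks of $k+1$ arcs together with one extra arc, or more symmetrically use three batches of nearly-$(k+1)$ arcs at arguments near $\frac{\pi}{3} + j\epsilon$, $\pi + j\epsilon$, and $\frac{5\pi}{3} + j\epsilon$ (for $0 \le j \le k$ and $\epsilon$ small), with radii increasing in the listed order. First I would verify the ``not through the center'' visibilities: within this staircase arrangement, the semi-bar analogue (unrolling the arcs) should give each arc roughly $2k+2$ neighbors among nearby-radius arcs, exactly as in the semi-arc edge-bound construction. Then I would verify the ``through the center'' visibilities: an arc of argument $> \pi$ in the third batch sees an arc of argument near $\frac{\pi}{3}$ through the center provided at most $k$ arcs lie radially between them on the diametrically opposite ray — and because each batch has only $k+1$ arcs with nearly-equal arguments, the blocking count stays at $k$. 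The crux is choosing the three batch arguments (and which arc gets doubled up, and precisely how many arcs land in each batch) so that every one of the three inter-batch pairs is realized through the center while the within-batch pairs are realized not through the center, and so that the two mechanisms together cover all $\binom{3k+4}{2}$ pairs with no pair needing more than $k$ intervening arcs.

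The main obstacle I expect is bookkeeping the interaction between the two visibility types at the ``seams'': an arc whose argument is just above $\pi$ participates both in center-visibilities (to the first batch) and in ordinary visibilities (to its radial neighbors in the third batch), and I need to confirm that counting an intervening arc once (per the double-intersection convention stated in Section~\ref{sec:defs}) does not secretly push some pair over the threshold of $k$ blockers. I would handle this by directly tabulating, for the specific argument assignment, the blocking count of every pair $(a_i, a_j)$, relying on the perturbation parameter $\epsilon$ to break ties, exactly as in the proof of Theorem~\ref{thm:semiarck_bound}. Once the representation is pinned down, the verification that it realizes $K_{3k+4}$ is a finite (if tedious) case analysis on which batch each of $a_i, a_j$ belongs to, and the theorem follows.
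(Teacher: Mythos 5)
Your overall strategy is the same as the paper's: exhibit an explicit semi-arc $k$-visibility representation built from a few batches of $k+1$ arcs at nearly equal arguments, realizing some pairs by ordinary (non-center) visibilities and the rest through the center. The paper's actual construction uses three batches of $k+1$ arcs at arguments near $\frac{\pi}{3}$, $\frac{2\pi}{3}$, and $\pi$ (the first two perturbed by $-j\epsilon$, the third by $+j\epsilon$), plus a single outermost arc at $\frac{5\pi}{3}$. So the shape of your construction is right.

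The gap is that you defer exactly the part that constitutes the proof: you leave open ``which arc gets doubled up, and precisely how many arcs land in each batch,'' and these choices are not routine bookkeeping --- several natural-looking resolutions actually fail. For instance, in your symmetric layout with batches near $\frac{\pi}{3}+j\epsilon$, $\pi+j\epsilon$, $\frac{5\pi}{3}+j\epsilon$, if the extra arc is absorbed into the outermost batch (making it $k+2$ arcs), then the outermost arc cannot see any arc of the innermost batch: not through the center because all $k+1$ arcs of the middle batch block the line of sight at every angle where an inner-batch arc exists, and through the center because the other $k+1$ arcs of its own batch all extend past angle $\frac{4\pi}{3}$ and hence all lie on the returning half of any admissible diameter. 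Similar problems arise with the arc at argument exactly $\pi$, which has no through-center visibilities at all and so must be reachable by non-center lines of sight from every inner arc. The sign of the $\epsilon$-perturbations also matters, since it determines which same-batch arcs remain present at the angle just below a given arc's endpoint. Until you commit to a specific argument list and actually run the per-pair blocking count you describe, the theorem is not proved; the content of the paper's proof is precisely that one such list works.
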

\begin{proof}
The construction is given in Figure~\ref{fig:semiarc_complete}. (As radius increases, the arcs have arguments of $\frac{\pi}{3},\frac{\pi}{3}-\epsilon,...,\frac{\pi}{3}-k\epsilon,\frac{2\pi}{3},...,\frac{2\pi}{3}-k\epsilon,\pi,...,\pi+k\epsilon,\frac{5\pi}{3}$ radians for $\epsilon$ sufficiently small.)
\end{proof}
\begin{figure}
\begin{center}
\includegraphics[width=.27\textwidth]{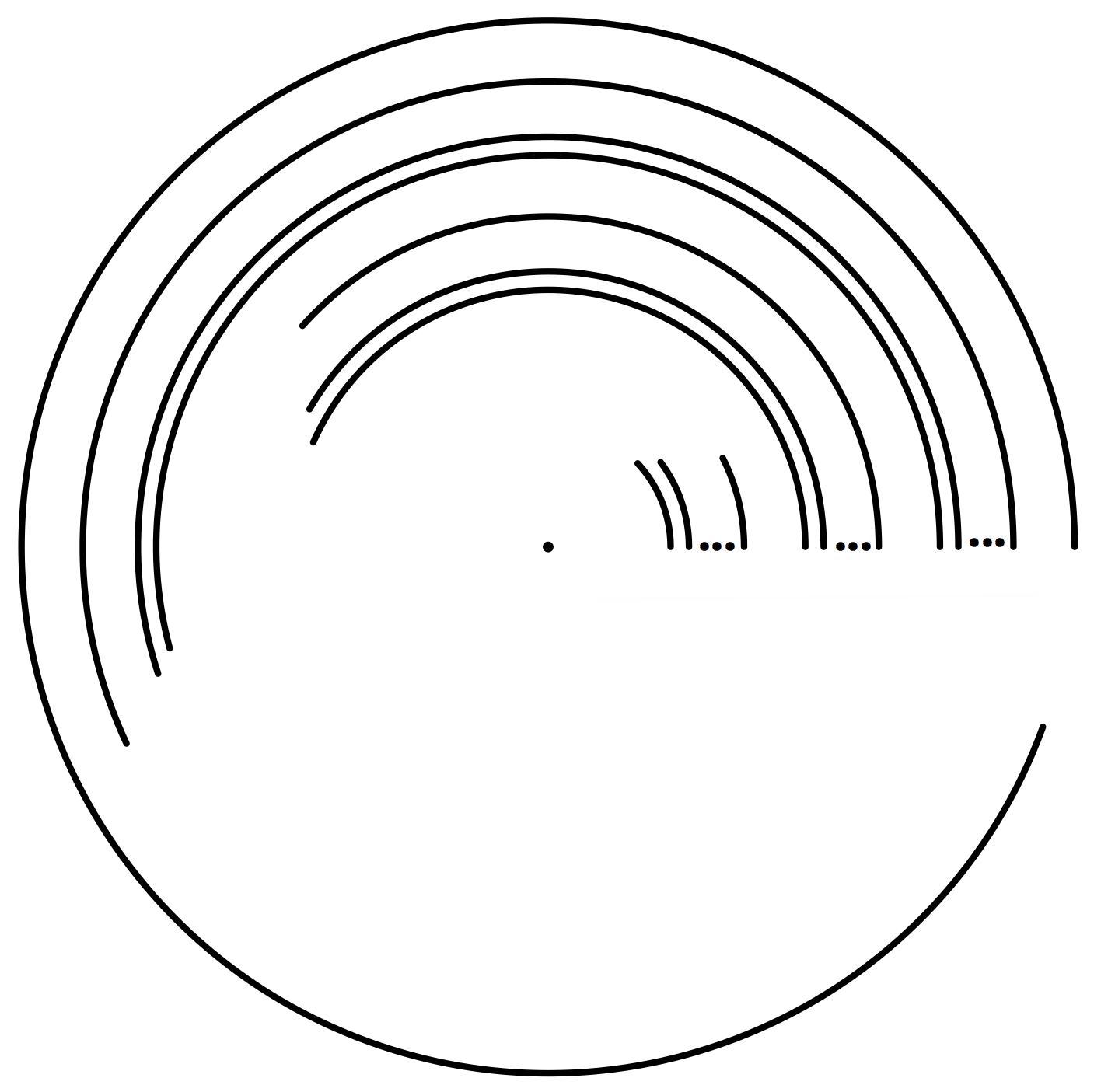}
\caption{Semi-arc $k$-visibility representation of $K_{3k+4}$ (Each set of arcs has $k+1$ arcs.)}
\label{fig:semiarc_complete}
\end{center}
\end{figure}

Combining Theorem~13 in Babbitt et al.~\cite{babbitt2013k} with Theorems~\ref{thm:semiarck_bound} and~\ref{thm:3k4}, we have a construction of a semi-arc $k$-visibility graph on $n$ vertices with the maximum number of edges for $n \leq 3k+4$ and $n \geq 5k+5$. This leaves open the question of finding a maximum construction (or proving an improved bound) for $3k + 4 < n < 5k+5$. When $k=0$, there is no gap; when $k=1$ the only open cases are $n = 8$ and $n = 9$.

\section{Thickness bounds}
\label{sec:thick}
In this section, we prove new bounds on the thickness of arc and semi-arc $k$-visibility graphs.

\begin{definition}
The \emph{thickness} of a graph $G$, denoted $\theta(G)$, is the smallest number of planar graphs into which the edges of $G$ can be partitioned.
\end{definition}

Bounding the thickness of bar $k$-visibility graphs has been a main subject of interest ever since their introduction by Dean et al.~\cite{dean2007bar}. This quantity is especially relevant to VLSI design, where graphs of low thickness correspond to circuit designs that are electrically practical~\cite{mutzel1998}.

Computing the thickness of a graph is \textsc{np}-hard in general~\cite{mansfield1983}, and exact thickness results are still open for all but a few classes of visibility graphs. Recently, Chang et al.~\cite{chang} proposed using a simpler quantity, arboricity, to obtain easier bounds on thickness purely in terms of extant edge bounds. The results of this section use this strategy and the results of Sections~\ref{sec:class} and~\ref{sec:edge} to prove new thickness bounds. (See Table~\ref{table:thick}.)

We first review some basic facts about arboricity.

\begin{definition}
The arboricity of a graph $G$, denoted $\arb(G)$, is the smallest number of forests into which the edges of a graph can be partitioned.
\end{definition}

Since forests are planar graphs, the thickness of $G$ is at most its arboricity. Moreover, it is easy to see that if the graph $H$ is planar, then $\arb(H) \leq 3$, so the arboricity of a graph is at most three times its thickness.

Unlike thickness, arboricity has a good characterization, given originally by Nash-Williams~\cite{nash1961edge}.

\begin{theorem}[Nash-Williams Theorem]
For any graph $G$,
$$\arb(G)=\max\limits_{H\subseteq{G}}\left\lceil{\frac{E_H}{N_H-1}}\right\rceil,$$ where $N_H$ and $E_H$ are the number of vertices and edges respectively in the subgraph $H$.
\end{theorem}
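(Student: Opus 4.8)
The plan is to prove the two inequalities separately. The lower bound $\arb(G)\ge\max_{H\subseteq G}\lceil E_H/(N_H-1)\rceil$ is the easy half and I would do it first by a direct counting argument: suppose the edges of $G$ are partitioned into $t$ forests $F_1,\dots,F_t$, and fix any subgraph $H$ with at least two vertices. Each $F_i$ restricted to the vertex set of $H$ is a forest on at most $N_H$ vertices, so it contains at most $N_H-1$ edges, and these restrictions together cover all $E_H$ edges of $H$. Hence $E_H\le t(N_H-1)$, so $t\ge E_H/(N_H-1)$, and since $t$ is an integer $t\ge\lceil E_H/(N_H-1)\rceil$. Taking the maximum over $H$ and the minimum over partitions gives the bound.

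For the upper bound — the substantive direction — set $k:=\max_{H\subseteq G}\lceil E_H/(N_H-1)\rceil$ and aim to partition $E(G)$ into $k$ forests. First I would reduce ``partition into $k$ forests'' to ``cover by $k$ (not necessarily disjoint) forests'': given such a cover, assign each edge to one forest containing it, and the resulting classes are pairwise disjoint subforests. Then I would invoke the Matroid Union Theorem (Nash-Williams, Edmonds) for $k$ copies of the graphic (cycle) matroid $M(G)$: with $r$ the rank function of $M(G)$, the largest edge set expressible as a union of $k$ forests has size $\min_{F\subseteq E(G)}\big(|E(G)\setminus F|+k\,r(F)\big)$. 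This minimum equals $|E(G)|$ precisely when $|F|\le k\,r(F)$ for every $F\subseteq E(G)$. Decomposing $F$ into the edge sets of the connected components of $(V(G),F)$ and using additivity of $|\cdot|$ and $r(\cdot)$, it suffices to verify this when $F=E(H)$ for a connected subgraph $H$, where $r(F)=N_H-1$ and $|F|=E_H$; there the condition is exactly $E_H\le k(N_H-1)$, which holds by the definition of $k$. Hence the union matroid has rank $|E(G)|$, so $E(G)$ decomposes into $k$ forests, giving $\arb(G)\le k$, and combined with the lower bound we get equality.

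The hard part is entirely in the upper bound, and the whole difficulty is packaged into the Matroid Union Theorem; a self-contained proof would replace that black box with a direct exchange argument. One would take, among all $k$-colorings of a subset of $E(G)$ with each color class a forest, one covering the maximum number of edges; if an edge $e_0=x_0y_0$ remains uncolored, then for every color $i$ the forest $F_i$ already contains an $x_0$--$y_0$ path. One then grows a set $S$ of vertices and a set of ``reachable'' edges by observing that recoloring an edge lying on such a path along a Kempe-style chain would free room for $e_0$ unless both its endpoints are already in $S$, and the endgame is to show that all reachable edges have both endpoints in an $N$-vertex set $S$ while the reachable edges together with $e_0$ number more than $k(N-1)$, contradicting the sparsity hypothesis. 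The delicate bookkeeping — keeping track of which exchanges are available and why the reachable region becomes saturated — is where essentially all the work lies, which is exactly why invoking matroid union (or citing Nash-Williams directly, as we do) is the expedient route.
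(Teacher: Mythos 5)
Your argument is correct. Note first that the paper does not prove this statement at all: it is a classical theorem cited to Nash-Williams, so there is no in-paper proof to compare against. Your lower bound is the standard counting argument and is airtight (each of the $t$ forests meets $H$ in at most $N_H-1$ edges, so $E_H\le t(N_H-1)$). Your upper bound is the standard modern proof via the Matroid Union Theorem: the reduction from covering to partitioning is fine, the rank formula for the union of $k$ copies of the cycle matroid is correctly stated, and the reduction of the condition $|F|\le k\,r(F)$ to connected $F$ via additivity of rank over components is exactly right, at which point the sparsity hypothesis $E_H\le k(N_H-1)$ closes the argument. The one convention you should make explicit is that the maximum is taken over subgraphs $H$ with $N_H\ge 2$ (equivalently, with at least one edge), so that the denominator is positive; this also matches how the rank of a single-vertex component of $F$ is $0$. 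Your closing sketch of a self-contained Kempe-chain/augmenting-exchange proof is a fair description of where the real work hides, but as written it is only a sketch; since you explicitly invoke matroid union (or Nash-Williams's original paper) as the black box, the proof stands. Compared with the paper, which buys the theorem entirely by citation, your route makes the dependence on matroid union explicit and isolates exactly which sparsity inequality is being used, at the cost of importing one nontrivial external theorem.
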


Though the statement of the Nash-Williams Theorem appears to require checking an exponential number of subgraphs, calculating the arboricity of a graph is a special case of finding the minimal partition of a matroid into independent sets, which can be done in polynomial time~\cite{edmonds1965}.
\begin{table}
\begin{center}
	\begin{tabular}{@{}lcccc@{}}\toprule
	&bar&semi-bar&arc&semi-arc \\ \midrule
	Dean et al.&$\le3k(6k+1)$&\\
	Chang et al.&$\le3k+3$&$\le2k$\\
	Babbitt et al. &&$\le2k$&\\
	Our Work &&&$\le3k+3$&$\le2k+1$   \\
	\bottomrule
	\end{tabular}
	\caption{Maximum thickness of bar, semi-bar, arc, and semi-arc $k$-visibility graphs.}\label{table:thick}
\end{center}
\end{table}

We obtain the following theorem.
\begin{theorem}
\label{thm:arckthickness}
The thickness of an arc $k$-visibility graph is at most $3k+3$.
\end{theorem}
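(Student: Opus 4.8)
The plan is to combine the edge bound from Theorem~\ref{thm:arcbound} with the arboricity–thickness relationship via the Nash-Williams Theorem, following the strategy of Chang et al. Since thickness is at most arboricity, it suffices to show that every arc $k$-visibility graph $G$ has arboricity at most $3k+3$. By the Nash-Williams Theorem, this amounts to checking that for every subgraph $H \subseteq G$ we have $\lceil E_H/(N_H - 1)\rceil \le 3k+3$, i.e. $E_H \le (3k+3)(N_H - 1)$.

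The key observation is that an induced subgraph of an arc $k$-visibility graph is again an arc $k$-visibility graph: deleting an arc from a representation can only decrease the number of intervening arcs on any line of sight, so no visibilities among the surviving arcs are lost, and any new visibilities only help. (For the Nash-Williams bound we may restrict attention to induced subgraphs, since adding back the missing edges on a fixed vertex set only increases $E_H$.) Therefore every such $H$ on $N_H$ vertices is an arc $k$-visibility graph, and Theorem~\ref{thm:arcbound} applies: if $N_H \le 4k+4$ then $E_H \le \binom{N_H}{2}$, and otherwise $E_H \le (k+1)\left(3N_H - \frac{3k+6}{2}\right)$.

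The remaining work is the routine inequality check in both regimes. For $N_H > 4k+4$ we need $(k+1)\left(3N_H - \frac{3k+6}{2}\right) \le (3k+3)(N_H - 1) = (k+1)(3N_H - 3)$, which reduces to $\frac{3k+6}{2} \ge 3$, i.e. $k \ge 0$ — always true. For $N_H \le 4k+4$ we need $\binom{N_H}{2} = \frac{N_H(N_H-1)}{2} \le (3k+3)(N_H-1)$, i.e. $N_H \le 6k+6$, which holds since $N_H \le 4k+4 \le 6k+6$. (One should also dispose of the trivial case $N_H = 1$, where $E_H = 0$.) Hence $E_H \le (3k+3)(N_H-1)$ for every subgraph $H$, so $\arb(G) \le 3k+3$ and therefore $\theta(G) \le \arb(G) \le 3k+3$.

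I do not anticipate a serious obstacle here; the only point requiring a little care is the claim that induced subgraphs of arc $k$-visibility graphs are arc $k$-visibility graphs, which must be stated explicitly since Theorem~\ref{thm:arcbound} is invoked on each $H$. If one wanted to avoid even that observation, one could instead note directly that the counting argument behind Theorem~\ref{thm:arcbound} bounds the number of edges incident to the outermost arcs and apply it recursively, but the induced-subgraph route is cleaner. Everything else is a one-line arithmetic comparison against the Nash-Williams formula.
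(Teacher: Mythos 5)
Your proposal is correct and follows essentially the same route as the paper: reduce to the Nash-Williams arboricity bound, observe that any subgraph $H$ on a given vertex set is contained in the arc $k$-visibility graph obtained by deleting the other arcs from the representation (so the edge bound of Theorem~\ref{thm:arcbound} applies with $N_H$ vertices), and verify $E_H \leq (3k+3)(N_H-1)$ in both regimes. The only cosmetic difference is the phrasing of the small-$N_H$ arithmetic; the paper bounds $\binom{N_H}{2}$ by $(2k+2)(N_H-1)$ while you compare directly against $(3k+3)(N_H-1)$, and both are fine.
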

\begin{proof}
Let $G$ be an arc $k$-visibility graph on $n$ vertices, and let $H \subseteq G$ have $\ell$ vertices. Removing all arcs from the visibility representation of $G$ except those corresponding to the vertices of $H$ yields an arc $k$-visibility graph $G'$ on $\ell$ vertices such that $H \subseteq G'$. So we can assume that $H$ is a subgraph of a arc $k$-visibility graph with the same number of vertices.

By Theorem~\ref{thm:arcbound}, $H$ has at most $(k + 1)\left(3 N_H - \frac{3k+6}{2}\right)$ edges if $\ell > 4k+4$ and $\binom{N_H}{2}$ otherwise. In the former case, 
\begin{equation*}
E_H \leq (k + 1)\left(3 N_H - \frac{3k+6}{2}\right) = (3k+3)(N_H - 1 - \frac k 2) \leq (3k+3)(N_H - 1).
\end{equation*}
In the latter case, $E_H = \frac{N_H}{2}(N_H - 1) \leq (2k+2)(N_H - 1)$. 

The Nash-Williams Theorem then yields
\begin{equation*}
\theta(G) \leq \arb(G) = \max\limits_{H\subseteq{G}}\left\lceil{\frac{E_H}{N_H-1}}\right\rceil \leq 3k+3\,
\end{equation*}
as desired.
\end{proof}

\begin{corollary}
\label{cor:arcthick}
The thickness of an arc visibility graph is at most $3$.
\end{corollary}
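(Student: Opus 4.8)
The plan is to obtain this as the $k=0$ case of Theorem~\ref{thm:arckthickness}. An arc visibility graph is by definition an arc $0$-visibility graph: a permissible line of sight is one that meets no arc other than the two it connects. Substituting $k = 0$ into Theorem~\ref{thm:arckthickness} therefore gives thickness at most $3\cdot 0 + 3 = 3$, and there is nothing further to prove.

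For completeness I would also spell out the (essentially identical) direct argument, now invoking the sharper edge bound of Corollary~\ref{cor:tightarcedge} rather than Theorem~\ref{thm:arcbound}. Let $G$ be an arc visibility graph and let $H \subseteq G$ have $N_H$ vertices. Exactly as in the proof of Theorem~\ref{thm:arckthickness}, deleting from the representation all arcs not corresponding to vertices of $H$ shows that $H$ is a subgraph of an arc visibility graph on $N_H$ vertices, so $E_H \leq 3N_H - 3$ when $N_H \geq 6$ and $E_H \leq \binom{N_H}{2}$ otherwise. In the first case $E_H \leq 3(N_H - 1)$ directly; in the second case one checks that $\binom{N_H}{2} \leq 3(N_H-1)$ for every $N_H \leq 5$ (the extreme case $N_H = 5$ reads $10 \leq 12$). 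Hence $E_H \leq 3(N_H-1)$ in all cases, and the Nash--Williams Theorem gives
\begin{equation*}
\theta(G) \leq \arb(G) = \max_{H \subseteq G} \left\lceil \frac{E_H}{N_H - 1} \right\rceil \leq 3\,.
\end{equation*}

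There is no real obstacle here: the statement is a specialization of a theorem already proved, and the only thing requiring a moment's verification is the elementary inequality $\binom{N_H}{2} \leq 3(N_H-1)$ for the small values $N_H \leq 5$, which is immediate. It is worth remarking that the bound $3$ cannot in general be improved by this method, since Corollary~\ref{cor:tightarcedge} exhibits arc visibility graphs with $3n-3$ edges, for which the Nash--Williams bound forces arboricity exactly $3$; whether the thickness itself can be smaller than $3$ for all arc visibility graphs is not addressed here.
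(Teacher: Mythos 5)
Your proof is correct and matches the paper's (implicit) derivation: the corollary is simply the $k=0$ case of Theorem~\ref{thm:arckthickness}, which is exactly how the paper obtains it. The supplementary direct argument via Corollary~\ref{cor:tightarcedge} and Nash--Williams is also sound but adds nothing beyond the specialization.
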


Note that Corollary~\ref{cor:arcthick} is stronger that what could have been obtained by applying the Nash-Williams Theorem to the bound of $3n-2$ for arc visibility graphs proved by Babbitt et al.~\cite{babbitt2013k}, which yields a maximum thickness of $4$.

Applying the above strategy to semi-arc $k$-visibility graphs using the edge bound in Babbitt et al.~\cite{babbitt2013k} (which we showed to be tight in Section~\ref{sec:edge}) shows that the thickness of these graphs is at most $2k+2$ for $k \geq 2$. Using the classification of semi-arc visibility graphs given in Section~\ref{sec:class}, we can obtain a stronger statement.

\begin{theorem}
The thickness of a semi-arc $k$-visibility graph is at most $2k+1$.
\end{theorem}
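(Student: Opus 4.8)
The plan is to mimic the argument for Theorem~\ref{thm:arckthickness}: use Nash-Williams together with an edge bound, but be more careful about the contribution of the diagonal part. Let $G$ be a semi-arc $k$-visibility graph on $n$ vertices and let $H \subseteq G$ with $N_H$ vertices. As in the proof of Theorem~\ref{thm:arckthickness}, removing arcs corresponding to vertices not in $H$ shows we may assume $H$ is a subgraph of a semi-arc $k$-visibility graph on $N_H$ vertices, so by the bound of Babbitt et al.\ (Theorem~13, reproved tight in Section~\ref{sec:edge}), $E_H \le (k+1)\bigl(2N_H - \frac{k+2}{2}\bigr)$. Writing this as $(2k+2)(N_H - 1) + (2k+2) - (k+1)\frac{k+2}{2} = (2k+2)(N_H-1) + (k+1)\frac{2-k}{2}$, we see that for $k \ge 2$ the correction term is $\le 0$, giving $\arb(H) \le 2k+2$ directly; the difficulty is shaving off the last $+1$, i.e.\ getting the ceiling down to $2k+1$.

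The key idea is that the edges of a semi-arc $k$-visibility graph split naturally, as in Section~\ref{sec:class}, into visibilities not through the center and visibilities through the center. The former always form a semi-bar $k$-visibility graph; by a result of Babbitt et al.\ (see Table~\ref{table:thick}) semi-bar $k$-visibility graphs have thickness at most $2k$, so those edges partition into $2k$ planar graphs. I would argue that the edges through the center, in any subgraph, form a graph of arboricity at most $1$ — i.e.\ a forest. Indeed, from the classification proof the through-center edges involve only the cutpoint vertices $b_1, \dots, b_m$, and the ``diagonal graph'' structure severely restricts them: the conditions force, for each $i$, that $b_i$ connects only to a contiguous block $b_r, \dots, b_\ell$ of later-indexed vertices, and the third bullet couples consecutive blocks so that there is essentially no ``slack'' to create a cycle. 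Concretely, I would show that any cycle among the $b_i$'s would require two indices $i < i'$ each joined to a common later vertex $b_t$ with an intermediate vertex $b_s$ ($i < i' \le s < t$) joined to neither, contradicting the contiguity and nesting conditions (second and third bullets). Hence the through-center edges form a single forest.

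Putting these together: the $2k$ forests (or planar pieces) from the semi-bar part plus the single forest from the through-center part give a partition of $E(G)$ into $2k+1$ planar graphs, so $\theta(G) \le 2k+1$. The main obstacle is the combinatorial lemma that the diagonal graph is always a forest — one has to check that the three defining conditions genuinely preclude a cycle, which requires carefully tracking how the ``highest neighbor'' index $b_k$ of $b_i$ relates to that of $b_{i+1}$ and iterating. A secondary technical point is that we need the forest/planarity claim to hold not just for $G$ itself but uniformly for every subgraph $H$ (so that the arboricity bound, not merely a thickness bound for $G$, goes through); but since restricting a semi-arc visibility representation to a subset of arcs is again a valid representation whose through-center edges still satisfy the diagonal conditions, this is automatic.
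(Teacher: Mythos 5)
Your decomposition has a genuine gap at its key lemma: the through-center edges of a semi-arc $k$-visibility graph do not form a forest, and for $k \ge 2$ they are not even planar, so the split into (non-center part) $+$ (center part) cannot yield $2k+1$. The paper's own extremal construction (Theorem~\ref{thm:semiarck_bound}, Figure~\ref{fig:semimax}) already witnesses this: it has $5(k+1)^2$ edges through the center on $5k+5$ vertices, and $5(k+1)^2 = 5k^2+10k+5 > 5k+4$ for every $k \ge 0$, so even at $k=0$ the through-center edges contain a cycle (for $k=0$ they form a $5$-cycle on the five arcs), and at $k=1$ they number $20$ on $10$ vertices. For $k \ge 2$ one further has $5(k+1)^2 > 3(5k+5)-6$, so the through-center part has thickness at least $2$ and your bound degrades to at least $2k+2$ --- no better than what your first paragraph already extracts from Nash--Williams. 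A second, related problem is that the diagonal-graph classification of Section~\ref{sec:class}, on which your forest argument leans, is established only for $k=0$; for $k \ge 1$ the through-center visibilities are not confined to the cutpoints $b_1,\dots,b_m$ and need not satisfy the diagonal conditions at all. (The non-center half of your plan is fine: those edges do form a semi-bar $k$-visibility graph of thickness at most $2k$ for $k\ge 1$.)

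The paper closes the gap with a different choice of the ``free'' planar piece. Instead of splitting off the through-center edges, it splits off the semi-arc $0$-visibility graph $SA_0$ determined by the same collection of arcs --- note this piece contains both center and non-center $0$-visibilities --- and this subgraph is planar by Theorem~\ref{thm:arcplanar}. Each remaining edge of $G$ corresponds to a region of visibility whose extremal (largest-argument) line of sight contains an endpoint of exactly one of the two arcs; directing each such edge out of the vertex of that arc gives every vertex outdegree at most $2k$, so the remaining edges partition into $2k$ graphs of maximum outdegree $1$, each of which is planar. This yields $\theta(G) \le 2k+1$. If you want to keep your two-part outline, this is the decomposition to substitute for yours: the planar piece should be the closest-visibility subgraph, not the through-center subgraph.
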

\begin{proof}
Fix a semi-arc $k$-visibility graph $G$ and an associated representation. As in Section~\ref{sec:edge}, we can assume that all arcs have radially distinct endpoints and distinct radii since we can achieve this by small perturbations without decreasing the thickness of the graph.

Given such a representation, call the semi-arc ($0$-)visibility graph associated with the collection of arcs $SA_0$. Note that $SA_0 \subseteq G$. Moreover, by Theorem~\ref{thm:arcplanar} $SA_0$ is planar. 

Remove the edges in $SA_0$ from $G$ and call the remaining graph $G'$. For every pair of vertices connected by an edge in $G'$, the line of sight with largest argument between the corresponding arcs contains one of their endpoints. Direct all edges in $G'$ from the arc whose endpoint is contained in the corresponding line of sight to the one whose endpoint is not contained in the line of sight. Each vertex in this graph has outdegree at most $2k$, so this graph can be partitioned into $2k$ disjoint graphs in which each vertex has outdegree at most $1$. It is easy to see that all such graphs are planar. So the thickness of $G'$ is at most $2k$. We obtain $\theta(G) \leq \theta(G') + \theta(SA_0) \leq 2k + 1$, as desired.
\end{proof}

\section{Comparison of families of arc $k$-visibility graphs and bar $k$-visibility graphs}
\label{sec:inclusion}
In this section, we consider the relationship between bar and arc visibility graphs. We first show several structural properties of the family of bar visibility graphs and then use these results to show that arc visibility graphs are not bar visibility graphs in general. On the other hand, we note that bar $j$-visibility graphs are a subset of arc $j$-visibility graphs, since any bar visibility representation can easily be converted into a corresponding arc visibility representation. (This observation appears in Babbitt et al.~\cite{babbitt2013k}.)

We begin by analyzing the families of bar $i$-visibility and $j$-visibility graphs for $i \neq j$. A result of Hartke et al.~\cite{hartke2007further} establishes that these families are incomparable under set inclusion when $j = i+1$. We use a similar construction to generalize their argument to all $i \neq j$.

We require one definition. \emph{Interval graphs} are generalizations of bar $k$-visibility graphs where lines of sight are allowed to pass through an unlimited number of intervening bars. Following our practice above, we call the collection of bars corresponding to an interval graph an \emph{interval representation}. Interval graphs are well studied and have been completely characterized. In particular, it is known that all interval graphs are chordal (that is, contain no induced cycle of length more than three)~\cite{gilmore1964}.

The following theorem provides a precise connection between $k$-visibility graphs and interval graphs.

\begin{theorem}
\label{thm:intbark}
Let $G$ be a $K_\ell$-free graph for $\ell \leq k + 2$. Then $G$ is an interval graph if and only if it is a bar $k$-visibility graph.
\end{theorem}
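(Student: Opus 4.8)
The plan is to prove both directions of the equivalence. The easy direction is that every bar $k$-visibility graph (whether or not it is $K_\ell$-free) is clearly an interval graph: given a bar visibility representation, simply forget the visibility restriction and regard each bar as an interval on the $x$-axis; two bars that had a line of sight certainly overlap in their $x$-projections, so the resulting interval graph contains the bar $k$-visibility graph. Wait — this only gives containment one way, so I should be careful. In fact the correct easy observation is: a bar $k$-visibility graph need not be an interval graph in general, but \emph{under the $K_\ell$-free hypothesis} the two notions collapse. So the real content is: (i) a $K_\ell$-free bar $k$-visibility graph is an interval graph, and (ii) a $K_\ell$-free interval graph is a bar $k$-visibility graph.

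For direction (i), I would argue that if $G$ is a bar $k$-visibility graph, then it is a subgraph of the interval graph obtained from the same bar arrangement by ignoring the visibility bound; and I would show that the $K_\ell$-freeness with $\ell \le k+2$ forces these two graphs to coincide. The key point is a local counting argument: if two bars overlap in $x$-projection but there is \emph{no} valid line of sight between them in the $k$-visibility sense, then every vertical line in the overlap region meets at least $k+1$ other bars; picking such a vertical line and taking $k+1$ of the bars it crosses together with the two original bars yields $k+3$ pairwise-overlapping intervals, hence (by the Helly property for intervals) $k+3$ pairwise-adjacent vertices in the interval graph. But I only need a $K_{k+2}$; refining, a vertical line through the overlap meeting $k+1$ intervening bars, together with the two endpoint bars, gives $k+3$ mutually overlapping intervals, so certainly a $K_{k+2}$ in $G$ — wait, these are edges in the \emph{interval} graph, not necessarily in $G$. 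I need to be more careful and instead choose the vertical line cleverly (e.g. at an extreme point of the overlap) so that among the $k+1$ blockers plus the two bars, enough pairs actually see each other within the $k$-visibility bound; the standard trick is that bars crossed by a common vertical line can be ordered by height and consecutive ones (in fact all pairs, via Helly) are mutually visible through few enough others. This forces a $K_{\ell}$ with $\ell \ge k+2$ in $G$, contradicting the hypothesis. Hence no such non-visible overlapping pair exists, and $G$ equals the interval graph.

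For direction (ii), given a $K_\ell$-free interval graph $G$ with $\ell \le k+2$, I would take an interval representation and convert it to a bar representation by placing each interval as a horizontal bar at a distinct height (heights chosen arbitrarily, or by a convenient ordering). Then two bars that overlap in $x$-projection have a vertical line of sight through at most all the other bars crossing that region; the $K_\ell$-freeness bounds the clique number by $k+1$, hence by the Helly property any vertical line crosses at most $k+1$ bars total, so at most $k-1$ strictly between any two of them — in particular at most $k$ bars block any given overlapping pair, so the line of sight is valid for $k$-visibility. Thus every edge of $G$ is realized, and no extra edges appear, so $G$ is a bar $k$-visibility graph.

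The main obstacle is direction (i): one must show that an overlapping-but-non-$k$-visible pair of bars genuinely produces a $K_{k+2}$ (a clique of \emph{actual} edges of $G$), not merely $k+2$ pairwise-overlapping intervals. This requires choosing the witnessing vertical line and the $k$ blockers carefully — presumably at an endpoint of one of the relevant bars, and invoking that among a set of bars all crossed by one vertical line, every pair is visible through at most the others in that set, whose number is controlled — and then checking that the resulting clique has size at least $k+2$. I expect the author's proof to handle this with an extremal choice of the offending pair (say, minimizing the number of blockers) so that the configuration is rigid enough to extract the clique.
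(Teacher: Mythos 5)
Your direction (ii) (interval $\Rightarrow$ bar $k$-visibility) is correct and is essentially the paper's argument: $K_\ell$-freeness plus the Helly property for intervals caps the number of bars met by any vertical line at $\ell-1\le k+1$, so every overlapping pair has at most $k-1\le k$ blockers and every interval edge is realized, while no edge can appear between disjoint intervals. The gap is in direction (i), and you have correctly located it but not closed it. From an overlapping, non-$k$-visible pair $a,b$ you get a vertical line crossing $a$, $b$, and at least $k+1$ bars strictly between them, i.e.\ at least $k+3$ bars in total; but the clique you then try to extract cannot contain both $a$ and $b$, since by hypothesis they are \emph{not} adjacent in $G$. Your proposed repair --- that ``bars crossed by a common vertical line \ldots (in fact all pairs, via Helly) are mutually visible through few enough others'' --- is false in exactly this situation: the pair $a,b$ is crossed by that common vertical line and is not mutually visible, and more generally a vertical line meeting $m$ bars separates its extreme pair by $m-2$ bars, which can exceed $k$.

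The fix is simpler than the extremal argument you anticipate. Order the (at least $k+3$) bars met by the blocking vertical line by height and take any $k+2$ \emph{consecutive} ones, for instance $a$ together with the $k+1$ blockers nearest to it. Any two bars in this set are separated, along that same vertical line, by at most $k$ bars, all of which lie inside the chosen set; hence every pair is $k$-visible, and these $k+2$ vertices form a clique in $G$ itself, giving a $K_{k+2}\supseteq K_\ell$ and the desired contradiction. This is the paper's argument phrased contrapositively: the paper shows directly that $K_\ell$-freeness forbids any vertical line from meeting $\ell$ bars (since $\ell$ bars on a common vertical line are pairwise separated by at most $\ell-2\le k$ of the others and so form a $K_\ell$ in $G$), after which every overlapping pair has at most $\ell-3\le k$ blockers and the bar representation is simultaneously an interval representation. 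No extremal choice of the offending pair is needed.
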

\begin{proof}
Suppose that $G$ is an interval graph, and fix an interval representation. Since $G$ is $K_\ell$ free, no vertical line intersects $\ell$ bars. Any line of sight in the interval representation therefore passes through at most $\ell - 2 \leq k $ intervening bars. So this set of bars is also a representation of $G$ as a bar $k$-visibility graph.

Conversely, assume $G$ is a bar $k$-visibility graph and fix a representation. If there existed a vertical line intersecting $\ell$ bars, then the corresponding vertices would form a copy of $K_\ell$, since every pair of bars would be separated by at most $\ell - 2 \leq k$ intervening bars. Therefore any pair of bars intersected by a vertical line are separated by at most $k$ bars, hence the corresponding vertices are connected in $G$. So this set of bars is also an interval representation for $G$ as an interval graph.
\end{proof}

Evans et al. observed~\cite[Lemma 1]{evans2013bar} that triangle-free bar $1$-visibility graphs are forests. Theorem~\ref{thm:intbark} implies the following stronger statement.

\begin{corollary}
If a bar $k$-visibility graph ($k\ge1$) is a triangle-free graph then it is a disjoint union of caterpillars. (A \emph{caterpillar} is a tree in which all vertices are within one edge of a central path.)
\end{corollary}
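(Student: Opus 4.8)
The plan is to combine Theorem~\ref{thm:intbark} with the known structure theory of chordal and interval graphs. First I would observe that a triangle-free bar $k$-visibility graph with $k \geq 1$ is in particular $K_3$-free, hence $K_\ell$-free for $\ell = 3 \leq k+2$. Theorem~\ref{thm:intbark} then applies and tells us that such a graph is an interval graph. So the corollary reduces to the purely graph-theoretic claim: a triangle-free interval graph is a disjoint union of caterpillars.

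Next I would prove that claim. One direction of structure is immediate: an interval graph is chordal, so a triangle-free interval graph has no cycles at all (any cycle of length $\geq 4$ would have a chord, and there are no triangles), hence it is a forest, and it suffices to show each connected component is a caterpillar. For the caterpillar property, recall the classical characterization that a tree is a caterpillar if and only if it contains no subdivision of the ``spider'' $S_{1,1,1}$ obtained by subdividing each edge of the claw $K_{1,3}$ once — equivalently, deleting all leaves yields a path. So I would argue that a triangle-free interval graph cannot contain this subdivided claw as an induced subgraph: that graph has $7$ vertices arranged so that its center is at distance $2$ from three independent leaves, and I would check (using, e.g., the Lekkerkerker--Boland asteroidal-triple obstruction, or directly from an interval representation) that it is not an interval graph. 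The three subdivided-path endpoints form an asteroidal triple, which no interval graph admits; alternatively, in any interval representation the interval for the center would have to ``see past'' the three branch intervals, forcing a triangle. Either way, the forbidden configuration is excluded, so every component is a caterpillar.

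I would also include the quick converse sanity check that every caterpillar (indeed every disjoint union of caterpillars) actually is a bar $1$-visibility graph and is triangle-free, so that the statement is not vacuous — a caterpillar is easily an interval graph (lay the spine intervals left-to-right with small overlaps and attach each leaf as a short interval overlapping only its spine neighbor), and since it is triangle-free it is $K_3$-free, so Theorem~\ref{thm:intbark} in the other direction makes it a bar $1$-visibility graph; but this converse is not strictly needed for the corollary as stated.

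The main obstacle is the second paragraph: proving that a triangle-free interval graph component is a caterpillar. Once one has ``forest'' from chordality, the caterpillar property requires citing or reproving the correct obstruction (the subdivided claw / asteroidal triple), and one must be careful to invoke it as an \emph{induced} subgraph obstruction and to verify that this $7$-vertex graph genuinely fails to be an interval graph. Everything else — the reduction via Theorem~\ref{thm:intbark} and the chordality argument — is routine.
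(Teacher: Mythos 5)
Your proposal is correct and follows the same route as the paper: apply Theorem~\ref{thm:intbark} to conclude the graph is an interval graph, then use that triangle-free interval graphs are disjoint unions of caterpillars. The only difference is that the paper simply cites this last fact (to Eckhoff), whereas you supply a proof of it via chordality (giving a forest) plus the subdivided-claw/asteroidal-triple obstruction, which is a valid and self-contained way to establish the same step.
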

\begin{proof}
Theorem~\ref{thm:intbark} implies that a triangle-free bar $k$-visibility graph with $k \geq 1$ is an interval graph, and any triangle-free interval graph is a union of caterpillars~\cite{eckhoff1993}.
\end{proof}

We can now state the main theorem.

\begin{theorem}
\label{thm:bibj}
Let $B_{k}$ be the family of bar $k$-visibility graphs for $k \geq 0$. Then $B_{i}\not\subseteq{B_{j}}$ and $B_{j}\not\subseteq{B_{i}}$ for $i \neq j$.
\end{theorem}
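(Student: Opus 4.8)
The plan is to prove the equivalent statement that $B_a \not\subseteq B_b$ whenever $a \neq b$; applying this with $(a,b)=(i,j)$ and with $(a,b)=(j,i)$ gives both non-inclusions in the theorem. I split into two cases.

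\textbf{More obstructions ($a>b$).} Here $B_a$ contains complete graphs too large to lie in $B_b$: by Dean et al.~\cite{dean2007bar}, $K_{4a+4}$ is a bar $a$-visibility graph, while $K_n$ is a bar $b$-visibility graph only when $n\le 4b+4$, so (as $4a+4>4b+4$) $K_{4a+4}\in B_a\setminus B_b$. Alternatively, compare the $O((b+1)n)$ edge bound for $B_b$ with the $\Theta((a+1)n)$ edge lower bound for $B_a$ and take $n$ large.

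\textbf{Fewer obstructions ($a<b$).} The strategy is to exhibit a graph $G\in B_a$ that is $K_\ell$-free for some $\ell\le b+2$ but is not chordal; then, if $G$ were a bar $b$-visibility graph, Theorem~\ref{thm:intbark} (with $k=b$) would make $G$ an interval graph, hence chordal --- a contradiction --- so $G\notin B_b$. For $a=0$ take $G$ to be a non-caterpillar tree, e.g.\ the spider with a centre and three legs of length two: it is a bar visibility graph, it is triangle-free (hence $K_{b+2}$-free for all $b\ge1$), and by the Corollary above it is not a bar $b$-visibility graph for any $b\ge 1$, being a triangle-free graph that is not a disjoint union of caterpillars. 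For $a\ge1$ and $b\ge a+2$ take $G$ to be an $(a+1)$-clique joined completely to a disjoint $4$-cycle. This is realizable as a bar $a$-visibility graph --- place $a+1$ ``wall'' bars in one vertical strip, a bar above and a bar below it whose only separator is the wall, and two more bars interspersed among the wall bars at two $x$-disjoint locations, each visible to both the top and bottom bar --- and it has clique number $(a+1)+2=a+3\le b+1$, so it is $K_{b+2}$-free, while it contains an induced $C_4$, so it is not chordal; hence $G\notin B_b$.

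The remaining sub-case $b=a+1$ with $a\ge1$ is the crux and the main obstacle. There, $K_{b+2}=K_{a+3}$-freeness forces $G$ to have clique number at most $a+2$, while Theorem~\ref{thm:intbark} at $k=a$ (applied to a non-chordal bar $a$-visibility graph) forces $G$ to contain $K_{a+2}$; so $G$ must be a non-interval bar $a$-visibility graph of clique number \emph{exactly} $a+2$. Such graphs are hard to produce: keeping two bars non-adjacent while they lie across many others forces a ``wall'' of $a+1$ mutually visible bars, and that wall --- a $K_{a+1}$ --- tends to merge with the walled-off bars and their common neighbours into a $K_{a+3}$; and attaching a genuine degree-one bar is delicate because, with $a\ge1$ obstructions, a line of sight through one bar is still a line of sight. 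I would resolve this case by a dedicated gadget in the spirit of Hartke et al.~\cite{hartke2007further} --- a $K_{a+2}$ bearing three pendant structures that form an asteroidal triple, arranged so that no clique grows past $a+2$ --- or, absent a clean such construction, by invoking their incomparability result for $j=i+1$ directly. Checking that this final gadget is simultaneously realizable, of clique number $a+2$, and not an interval graph is where the real work lies.
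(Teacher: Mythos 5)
Your overall architecture is the paper's: for $a>b$, separate the families with large complete graphs (Hartke et al.\ show $K_{4a+4}\in B_a$ while $K_{4a+4}\notin B_b$), and for $a<b$, exhibit a graph in $B_a$ that is $K_\ell$-free for some $\ell\le b+2$ but not an interval graph, so that Theorem~\ref{thm:intbark} excludes it from $B_b$. The first direction and your $a=0$ spider are fine. The genuine gap is the one you flag yourself: the sub-case $b=a+1$, $a\ge 1$ is never actually proved. Your gadget, $K_{a+1}$ joined to $C_4$, has clique number $(a+1)+2=a+3=b+2$, which is exactly one too large --- it is \emph{not} $K_{b+2}$-free, so Theorem~\ref{thm:intbark} cannot be applied and the argument fails precisely in the adjacent case. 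Deferring to Hartke et al.~\cite{hartke2007further} for $j=i+1$ would legitimately close this (the paper itself credits them with the adjacent case before generalizing it), but as written you commit neither to that citation nor to a concrete gadget, so the proof is incomplete.

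The paper closes the case with a leaner version of your own construction: its gadget is $K_{j}\times C_4$ with $j=a$ the \emph{smaller} index, which it shows is $K_{j+3}$-free --- that is, clique number $a+2$, exactly the ``non-interval bar $a$-visibility graph of clique number exactly $a+2$'' that you correctly identify as the needed object. Since $a+3\le b+2$ already when $b\ge a+1$, one gadget covers every $a<b$ uniformly, adjacent case included. The realization (given in the paper by a figure) uses only $a$ wall bars spanning a common interval, a long bar above and a long bar below the wall forming the blocked pair of the $C_4$, and the two remaining $C_4$ bars inserted \emph{inside} the wall with $x$-disjoint ranges that together cover the wall's span. The point your construction misses is that these two inserted bars themselves supply the $(a+1)$-st obstruction on every vertical line between the top and bottom bars, so no extra wall bar is needed and the largest clique is the $a$ walls together with one inserted bar and one of the top/bottom bars, of size $a+2$. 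With that replacement (or with an explicit appeal to Hartke et al.\ for $b=a+1$), your argument is complete and coincides with the paper's.
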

\begin{proof}
Without loss of generality let $i>j$. Consider the graph $K_{j} \times {C_4}$, the tensor product of the complete graph on $j$ vertices with the cycle graph on $4$ vertices. Note that $K_{j} \times {C_4}$ is $K_{j+3}$ free.

Since $K_{j} \times {C_4}$ contains an induced four-cycle, it is not chordal and in particular is not an interval graph. Since $j+3 \leq i +2$, Theorem~\ref{thm:intbark} implies $K_{j} \times {C_4}$ is not a bar $i$-visibility graph. However, Figure~\ref{fig:tensor} shows that it is a bar $j$-visibility graph. Hence $B_i \not\subseteq B_j$.

\begin{figure}
\begin{center}
\includegraphics[width=.35\textwidth]{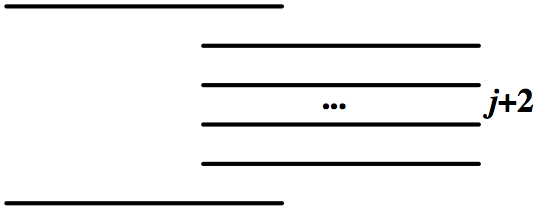}
\caption{Bar $j$-visibility representation of $K_{j}\times{C_4}$}\label{fig:tensor}
\end{center}
\end{figure}

On the other hand, Hartke et al.~\cite{hartke2007further} show that $K_{4j+4}\notin{B_i}$ and $K_{4j+4}\in{B_j}$. Therefore $B_j\not\subseteq{B_i}$.
\end{proof}

Let $A_k$ be the family of arc $k$-visibility graphs. Since $B_k \subset A_k$ for all $k$, Theorem~\ref{thm:bibj} implies that $A_j \not \subseteq B_i$ for all $i \neq j$. A more careful analysis shows that in fact this claim holds for all $i$ and $j$.
\begin{theorem}
\label{thm:ajbi}
Let $B_{k}$ be the family of bar $k$-visibility graphs and $A_{k}$ be the family of arc $k$-visibility graphs. Then $A_{j}\not\subseteq{B_{i}}$ for all $i, j \geq 0$.
\end{theorem}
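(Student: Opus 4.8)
The plan is to reduce immediately to the diagonal case and then exhibit a large complete graph. When $i\neq j$ there is nothing new to do: since every bar $j$-visibility graph is an arc $j$-visibility graph, the graph produced by Theorem~\ref{thm:bibj} in $B_j\setminus B_i$ (for instance $K_j\times C_4$, as in Figure~\ref{fig:tensor}) also lies in $A_j\setminus B_i$, so $A_j\not\subseteq B_i$. This is exactly the observation recorded just before the statement, so the entire content of the theorem is the diagonal case $i=j$, i.e.\ producing an arc $j$-visibility graph that is not a bar $j$-visibility graph.

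For this I would take the graph to be $K_{4j+5}$. By the clique bound for bar $k$-visibility graphs used in the proof of Theorem~\ref{thm:bibj} (Hartke et al.~\cite{hartke2007further}), the largest complete bar $j$-visibility graph is $K_{4j+4}$, so $K_{4j+5}\notin B_j$; it therefore suffices to give an arc $j$-visibility representation of $K_{4j+5}$. For $j=0$ this amounts to the known fact that $K_5$ is an arc visibility graph (Babbitt et al.~\cite{babbitt2013k}) together with $K_5\notin B_0$ (bar visibility graphs are planar), so the substantive step is the construction for general $j$. Note that the improved edge bound of Theorem~\ref{thm:arcbound} still permits a clique of this size, so there is no a priori obstruction.

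To build the representation I would start from a bar $j$-visibility representation of $K_{4j+4}$, bend the bars into concentric circular arcs occupying (say) the upper half of the plane so as to obtain an arc $j$-visibility representation of $K_{4j+4}$, and then insert one further arc of very small radius spanning almost the whole circle. The new arc is designed so that each of the original $4j+4$ arcs sees it along a radial segment passing through the centre of the circle, and one arranges the angular coordinates so that every such segment meets at most $j$ of the original arcs; at the same time, pushing everything into the upper half-plane and adding a thin innermost arc destroys none of the original $\binom{4j+4}{2}$ visibilities. Verifying these two claims — that all $4j+4$ new edges appear with at most $j$ intervening arcs, and that no old edge is lost — is the heart of the argument and the step I expect to be the main obstacle; everything else (the reduction to $i=j$, and the fact that $K_{4j+5}\notin B_j$) is immediate from Theorem~\ref{thm:bibj} and the clique bound of Hartke et al.

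For $j\ge 1$ there is an alternative that sidesteps complete graphs: by Theorem~\ref{thm:intbark}, any $K_{j+2}$-free graph that is not chordal fails to be a bar $j$-visibility graph, so it would suffice to realize some such graph as an arc $j$-visibility graph — for example a long induced cycle (or a bounded power of one) carried by a spiral of arcs that closes up via through-centre visibilities and uses enough arcs to block every chord. Either way, the difficulty is concentrated in exhibiting one explicit arc $j$-visibility representation with the prescribed combinatorial structure, while the reduction steps are routine.
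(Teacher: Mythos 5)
Your reductions are fine: the off-diagonal cases $i\neq j$ do follow from Theorem~\ref{thm:bibj} together with $B_j\subseteq A_j$ (as the paper itself notes just before the statement), and for $i=j=0$ the pair ($K_5$ is an arc visibility graph, $K_5$ is nonplanar hence not in $B_0$) works and is exactly what the paper does. The problem is that for $i=j\geq 1$ you have correctly isolated the entire content of the theorem --- exhibiting one graph in $A_j\setminus B_j$ --- and then not supplied it. Your primary candidate, an arc $j$-visibility representation of $K_{4j+5}$, is an unproved (and not obviously true) claim, and the sketched construction has a concrete defect: if all $4j+4$ original arcs sit in the upper half-plane, then a radial segment from the thin innermost arc to one of the outer original arcs --- whether or not it passes through the centre --- must cross \emph{every} original arc of smaller radius lying at that angle, which is generically far more than $j$ of them; nothing in a $K_{4j+4}$ bar representation guarantees an angle at which an outer arc has at most $j$ arcs beneath it. So the ``heart of the argument'' you flag really is missing, not merely tedious.

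The paper avoids all of this with a much smaller example: for every $i\geq 1$ it uses $C_4$, which is triangle-free and not chordal, hence not an interval graph, hence not a bar $i$-visibility graph by Theorem~\ref{thm:intbark}; and $C_4$ has an easy four-arc $j$-visibility representation for every $j$ (Figure~\ref{fig:c5}). This is essentially your ``alternative'' route, but note that you do not need a long induced cycle, a spiral, or any chord-blocking --- with only four arcs a line of sight can cross at most two others, so one only has to arrange the angular spans so that exactly the four cycle edges admit connecting radial segments. If you replace your $K_{4j+5}$ plan with this (and actually write down the four arcs), the proof closes; as it stands, the proposal is incomplete at its decisive step.
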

\begin{figure}
\begin{minipage}[t]{.45\linewidth}
\centering
\includegraphics[width=1.5in]{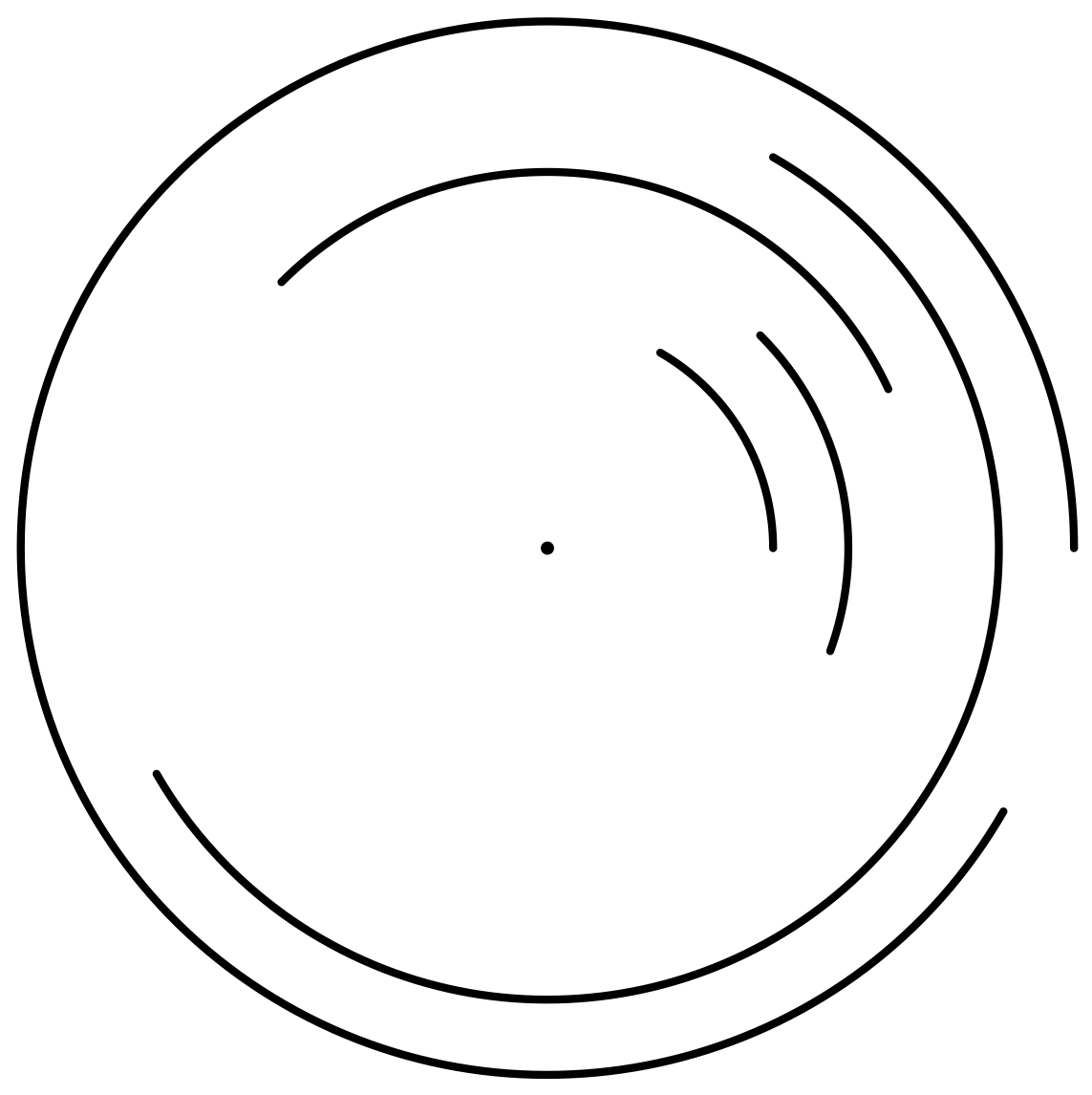}
\caption{Arc $j$-visibility representation of $K_5$}
\label{fig:k5}
\end{minipage}
\hfill
\begin{minipage}[t]{.45\linewidth}
\centering
\includegraphics[width=1.5in]{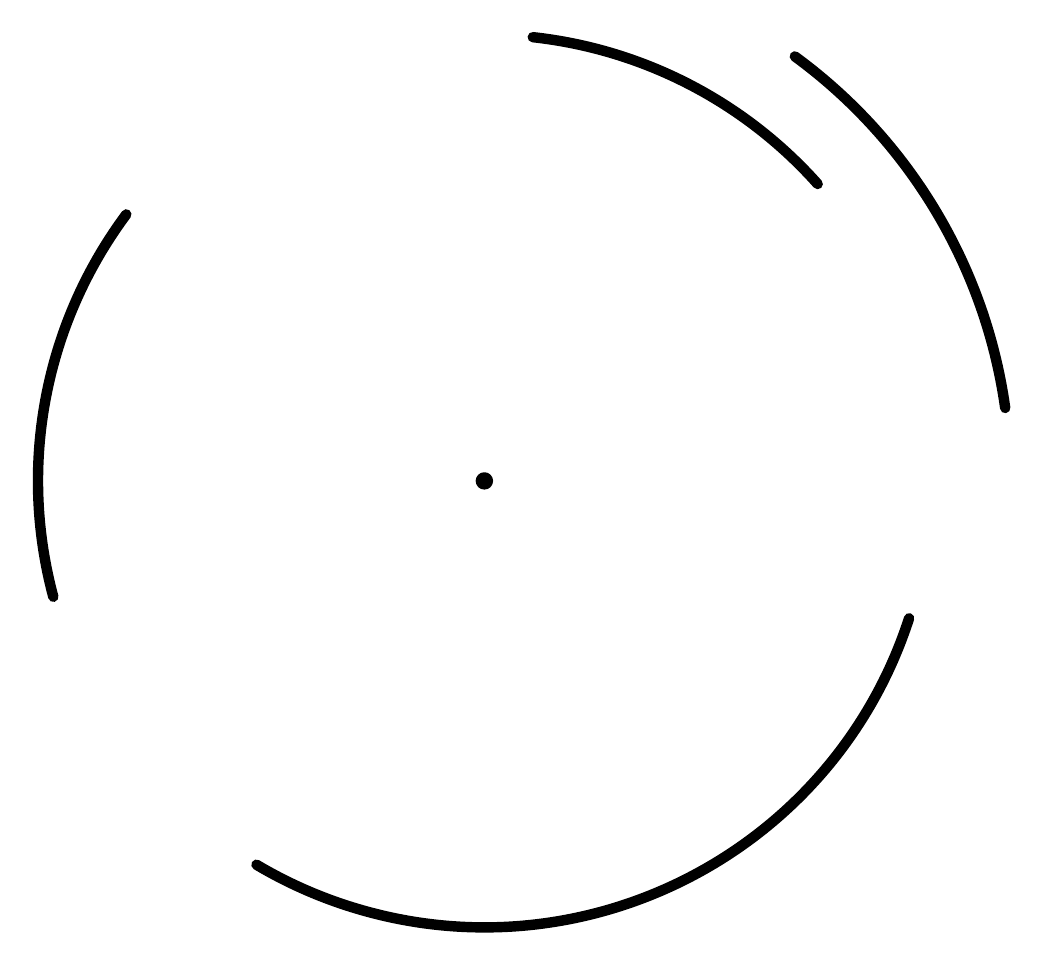}
\caption{Arc $j$-visibility representation of $C_4$}
\label{fig:c5}
\end{minipage}
\end{figure}
\begin{proof}
Fix a nonnegative $j$. We first show that $A_j \not \subseteq B_0$. Figure~\ref{fig:k5} shows that $K_5$ is an arc $j$-visibility graph for any $j \geq 0$. Since all bar visibility graphs are planar, this implies $A_j \not \subseteq B_0$.

Suppose now that $i \geq 1$. The cycle graph $C_4$ is triangle free, so Theorem~\ref{thm:intbark} implies that it is a bar $i$-visibility graph if and only if it is an interval graph. Since $C_4$ is not chordal, we conclude that $C_4$ is not a bar $i$-visibility graph for all $i \geq 1$. But Figure~\ref{fig:c5} shows it is an arc $j$-visibility graph. Therefore $A_j \not \subseteq B_i$, as desired.
\end{proof}

We note that the analogous questions for semi-bar $k$-visibility graphs and semi-arc $k$-visibility graphs are far simpler.
The family of semi-bar (semi-arc) $i$-visibility graphs is never contained in the family of semi-bar (semi-arc) $j$-visibility graphs for $i \neq j$ because a semi-bar or semi-arc $k$-visibility graph on $n$ vertices has at least $(k+1)n-O(1)$ edges and at most $2(k+1)n+O(1)$ edges.
\section{Random semi-bar and semi-arc visibility graphs}
\label{sec:random}
In this section, we consider random versions of semi-bar and semi-arc $k$-visibility graphs. We begin with semi-bar graphs, since these are a subfamily of semi-arc graphs.

To model random semi-bar $k$-visibility graphs, we first note that semi-bar $k$-visibility representations are in one-to-one correspondence with elements of $S_n$, the symmetric group on $n$ letters, since a representation can be uniquely defined by giving the relative lengths of the bars as they appear from top to bottom. This motivates the following definition.

\begin{definition}
A \emph{random semi-bar $k$-visibility graph on $n$ vertices} is the random graph corresponding to the semi-bar $k$-visibility representation generated by letting the right endpoints of $n$ semi-bars be drawn i.i.d.\ uniformly from $(0, 1)$. Call the resulting distribution on graphs $\mathcal G_n^k$.
\end{definition}

The same distribution can also be obtained by choosing an element of $S_n$ uniformly at random and constructing the semi-bar $k$-visibility representation corresponding to that permutation.

\begin{theorem}
\label{thm:expectedsemibar}
Let $G \sim \mathcal G_n^k$ and let $E$ be its number of edges. Then $\mathbb E[E]=\binom{n}{2}$ for $n\le{k+2}$ and $$\mathbb E[E]=\frac{1}{2}(k+1)\left(4n-3k-6-2(k+2)\sum_{l=k+3}^n \frac{1}{l}\right) = (k+1)(2n - o(n))$$ for $n\ge{k+3}$. Moreover, for any $t \geq 0$, 
\begin{equation*}
\mathbb P(|E - \mathbb E[E]| > (k+1)t) \leq 2 \exp\left(-\frac{2 t^2}{n}\right)\,.
\end{equation*}
\end{theorem}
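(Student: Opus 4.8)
First compute $\mathbb{E}[E]$ exactly by an edge-accounting argument, then establish the tail bound by writing $E$ as a sum of independent bounded contributions and applying a bounded-differences inequality.

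\textbf{Expectation.} Work with a uniformly random permutation $\pi \in S_n$, where $\pi(b)$ records the rank of the right endpoint of the $b$-th semi-bar, with bars indexed $1,\dots,n$ from top to bottom. Following the negative-edge bookkeeping of Section~\ref{sec:edge}, note that in a semi-bar representation the rightmost line of sight realizing any edge passes through the right endpoint of the \emph{shorter} of the two bars, so every edge is a negative edge of exactly one bar. Writing $U_b = \#\{a<b:\pi(a)>\pi(b)\}$ for the number of taller bars above bar $b$ and $D_b=\#\{a>b:\pi(a)>\pi(b)\}$ for the number below, the upward negative edges of bar $b$ are precisely the edges to the $\min(k+1,U_b)$ taller bars above it that are nearest in vertical order (the $t$-th such bar is visible iff only the $t-1$ nearer taller bars obstruct the line through $b$'s right endpoint, i.e.\ iff $t\le k+1$), and symmetrically downward. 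Hence
\[
E=\sum_{b=1}^n\bigl[\min(k+1,U_b)+\min(k+1,D_b)\bigr].
\]
Since $U_b$ is the $b$-th entry of the inversion table of $\pi$, it is uniform on $\{0,1,\dots,b-1\}$; by the top--bottom reflection symmetry, $\sum_b\min(k+1,D_b)$ has the same expectation as $\sum_b\min(k+1,U_b)$, so $\mathbb{E}[E]=2\sum_{b=1}^n\mathbb{E}[\min(k+1,U_b)]$. A direct computation gives $\mathbb{E}[\min(k+1,U_b)]=(b-1)/2$ for $b\le k+1$ and $\mathbb{E}[\min(k+1,U_b)]=(k+1)\bigl(1-\tfrac{k+2}{2b}\bigr)$ for $b\ge k+2$; summing yields $\binom n2$ when $n\le k+2$ and the stated harmonic-sum formula when $n\ge k+3$, with the asymptotic form following from $\sum_{l=k+3}^n 1/l=O(\log n)$.

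\textbf{Concentration.} Write $E=A+B$ with $A=\sum_b\min(k+1,U_b)$ and $B=\sum_b\min(k+1,D_b)$; each summand lies in $[0,k+1]$. The structural fact to exploit is that $(U_1,\dots,U_n)$ are mutually independent (the inversion table of a uniform permutation) and likewise $(D_1,\dots,D_n)$ are mutually independent (the co-inversion table), so $A$ and $B$ are each sums of $n$ independent $[0,k+1]$-valued variables. I would then obtain the tail bound from the bounded-differences (McDiarmid) inequality: realizing $E$ as a function of $n$ independent coordinates each of which moves $E$ within a window of length at most $k+1$ gives $\mathbb{P}(|E-\mathbb{E}E|>\lambda)\le 2\exp\bigl(-2\lambda^2/(n(k+1)^2)\bigr)$, and setting $\lambda=(k+1)t$ is exactly the claimed inequality.

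\textbf{Main obstacle.} The delicate step is the bounded-differences verification, and especially the choice of coordinates. The i.i.d.\ right endpoints themselves are \emph{not} suitable: relocating one endpoint both changes the edges incident to that bar and changes which pairs it obstructs, and one checks (e.g.\ moving the outermost bar from shortest to longest) that $E$ can then change by more than $k+1$. The correct route is to use the inversion-table encoding, where revealing a coordinate $U_j$ changes $\min(k+1,U_j)$ within a length-$(k+1)$ window directly, and the effect on the $B$-part must be tracked; here one needs the cancellation coming from $\sum_b(U_b+D_b)=\binom n2$ being constant to hold the per-coordinate effect at $k+1$ rather than $2(k+1)$. As a fallback, applying Hoeffding's inequality to $A$ and $B$ separately and combining reproduces a bound of the same shape, and the remaining work is to recover the precise constant in $2\exp(-2t^2/n)$.
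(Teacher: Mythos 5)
Your expectation computation is correct, and it is essentially the paper's argument re-indexed: assigning each edge to its shorter bar and counting the $\min(k+1,\cdot)$ nearest taller neighbours on each side is exactly the paper's ``insert bars in decreasing order of length'' bookkeeping, written in terms of vertical position instead of length rank. The resulting harmonic-sum formula matches the statement.

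The concentration step, however, has a genuine gap, and it sits exactly where you flagged trouble. The position-indexed inversion table $(U_1,\dots,U_n)$ is a legitimate family of independent coordinates, but $E$ does \emph{not} have bounded differences of order $k+1$ (or even $2(k+1)$) with respect to them, and no cancellation from $\sum_b(U_b+D_b)=\binom{n}{2}$ can rescue this, because the truncation at $k+1$ destroys the exact cancellation. Concretely, take $k=0$ and the decreasing configuration $\pi=(n,n-1,\dots,1)$, whose inversion table is $(0,1,\dots,n-1)$ and whose graph is a path with $n-1$ edges; changing the single coordinate $U_n$ from $n-1$ to $0$ produces $\pi=(n-1,\dots,1,n)$, where the bottom bar is longest and sees all $n-1$ others, giving $2n-3$ edges. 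One coordinate change moves $E$ by $n-2$, so McDiarmid in these coordinates fails outright, and your fallback (Hoeffding applied to $A$ and $B$ separately, each in its own coordinate system) only yields a bound of the form $4\exp\left(-t^2/(2n)\right)$, not the stated $2\exp\left(-2t^2/n\right)$. The fix is to index the edge count by length rank rather than by position: let $j_m\in\{0,\dots,m\}$ be the number of longer bars above the $(m+1)$-th longest bar. The $j_m$ are independent and uniform, and since each edge is still counted once at its shorter endpoint, $E=\sum_{m=0}^{n-1}\bigl(\min(k+1,j_m)+\min(k+1,m-j_m)\bigr)$ is now a sum of \emph{independent} terms; moreover each summand $f_m(j)=\min(k+1,j)+\min(k+1,m-j)$ has range at most $k+1$ (it is constantly $m$ for $m\le k+1$, and otherwise takes values in $[k+1,\min(m,2k+2)]$). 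Plain Hoeffding for independent bounded summands then gives exactly $2\exp\left(-2t^2/n\right)$; this independent-increments decomposition is the content of the paper's proof.
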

\begin{proof}
If $n \leq k+2$, then $G$ is the complete graph and the claims are trivial. So suppose that $n \geq k+3$. 

Since drawing $G \sim \mathcal G_n^k$ is equivalent to drawing a permutation uniformly at random, we can generate $G$ by generating a permutation one element at a time. In each of $n$ rounds, we add a bar, shorter than all those added thus far, to a semi-bar visibility representation in a random position. Bars added in this way do not affect the visibilities already present in the graph, so it suffices to consider those added by the addition of the new bar.

In general, the addition of a new bar adds $2k+2$ edges, except when the new bar has fewer than $k+1$ bars to its right or left. If $m$ bars have already been added, then there are $m+1$ possible positions for the new bar, each equally likely. If $m \leq k + 1$, then all placements of the new bar add $m$ edges. If $m \geq k+2$, then the addition of the new bar adds between $k+1$ and $\min\{m, 2k+2\}$ edges. In either case the difference between the largest and smallest possible number of additional edges is at most $k+1$. Applying the Azuma-Hoeffding inequality yields the concentration bound.

To find the expected number of edges, we apply linearity of expectation. Suppose $m$ bars have been added so far. If $m \leq k+1$, then as noted above the expected number of edges associated with the new bar is $m$. If $m > k+1$, the expected number of edges between the new bar and bars to its right is $\frac{1}{m+1}\left((m-k)(k+1)+ \sum_{\ell=0}^k \ell\right)$, and by symmetry the total expected number of new edges is twice this number. Summing and simplifying yields the desired bound.
\end{proof}

Felsner and Massow~\cite{felsner2008parameters} established that a semi-bar $k$-visibility graph on $n \geq 2k + 2$ vertices has at most $(k+1)(2n - 2k - 3)$ edges. It is easy to see that the minimum number of edges is of order $(k+1)n$. Theorem~\ref{thm:expectedsemibar} therefore implies that expected number of edges in a random semi-bar $k$-visibility graph has the same first-order behavior as the maximum number of edges for large $n$, and moreover that the fluctuations about the expected value are significantly smaller, of order $O((k+1)\sqrt n)$.

We also consider random semi-arc graphs. We obtain an analogous model to the one considered above for semi-bar graphs by drawing one endpoint of each arc uniformly from $(0, 2 \pi)$.
The resulting graphs have two types of visibilities, depending on whether they pass through the center of the circle. It is clear that those not through the center have the structure of a semi-bar $k$-visibility graph, so the expected number of such edges is given by Theorem~\ref{thm:expectedsemibar}. The following theorem shows that, in expectation, the number of visibilities through the center is of strictly lower order.

\begin{theorem}
Let $G$ be a random semi-arc $k$-visibility graph with $n$ vertices, and let $C$ be the number of edges corresponding to visibilities through the center. Then $\mathbb E [C] \leq \frac{(k+1)(k+2)}{2}\log(n)+O(1)$ where $O(1)$ is independent of $n$.
\end{theorem}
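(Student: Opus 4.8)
The plan is to bound the expected number of center-crossing edges by a union-type argument over ordered pairs of arcs, exploiting the fact that a center-crossing visibility between two arcs is a rare event when the arcs are generated by choosing one endpoint uniformly in $(0,2\pi)$. First I would set up the generative model precisely: arcs $a_1,\dots,a_n$ in order of increasing radius, with the free endpoint of $a_t$ drawn i.i.d.\ uniformly from $(0,2\pi)$; equivalently, condition on the order type and think of the $n$ endpoints as a uniformly random permutation together with their actual angular positions. A center-crossing line of sight between two arcs $a_s$ and $a_t$ with $s<t$ (so $a_s$ is inner) requires in particular that some radial line and its opposite ray both meet the two arcs, which forces both arcs to have argument at least $\pi$ in the relevant configuration, and — crucially — requires that among the $t-s-1$ arcs strictly between them, at most $k$ block the relevant radial line on either side.

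The key step is to estimate, for a fixed inner arc $a_s$ and outer arc $a_t$, the probability that the edge $\{s,t\}$ is a center-crossing edge, and to show this probability is $O\!\big(\tfrac{(k+1)}{t-s}\big)$ (or a comparable bound whose sum telescopes to a logarithm). The mechanism is: for $a_s$ and $a_t$ to see each other through the center along a limiting line determined by the endpoint of some arc, only a bounded number ($O(k)$) of the $t-s-1$ intermediate arcs may have their endpoint in the ``wrong'' half-circle relative to that line; since each intermediate arc's endpoint is uniform and independent, the probability that fewer than, say, $2(k+1)$ of them fall on the blocking side decays; more simply, one shows that the number of center-crossing edges incident to $a_t$ from inner arcs is at most $2k+2$ deterministically (a center-crossing line of sight from $a_t$ passes through at most $k$ intervening arcs on each side, so $a_t$ has at most $2(k+1)$ center-crossing neighbors total), and the probability that any given inner arc $a_s$ is among those neighbors is governed by where its endpoint lands among the endpoints of $a_{s+1},\dots,a_{t}$. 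Concretely, I expect to show $\mathbb{P}(\{s,t\}\text{ is a center edge}) \le \dfrac{c(k+1)(k+2)}{t-s}$ for an absolute constant $c$ (really $c$ can be taken so the head terms are absorbed into the $O(1)$), by arguing that a necessary condition is that the endpoint of $a_s$, among the $t-s+1$ endpoints of $a_s,a_{s+1},\dots,a_t$ in their circular order, occupies one of a bounded number of ``slots'' relative to the endpoint of $a_t$ that leave $\le k$ blockers on the active side.

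Given such a per-pair bound, the computation is routine:
\begin{equation*}
\mathbb{E}[C] \;=\; \sum_{1 \le s < t \le n} \mathbb{P}(\{s,t\}\text{ is a center edge}) \;\le\; \sum_{t=1}^{n} \sum_{d=1}^{t-1} \frac{c(k+1)(k+2)}{d} \;\le\; c(k+1)(k+2)\, n \cdot \frac{H_n}{n},
\end{equation*}
which is not quite the claimed bound, so the real work is to sharpen the per-pair estimate to $O\!\big(\tfrac{(k+1)(k+2)}{n}\cdot\tfrac{1}{\text{something}}\big)$ or, more likely, to organize the sum differently: fix the outer arc $a_t$, note it has at most $2(k+1)$ center-crossing neighbors, and for each potential inner neighbor bound the probability it is ``selected'' by $1/(\text{its rank from the top among the first }t)$, so that $\mathbb{E}[C] \le \sum_{t} (k+1)\sum_{j} \tfrac{1}{j}$-type sums collapse to $\tfrac{(k+1)(k+2)}{2}\sum_{t=1}^n \tfrac{1}{t} + O(1)$. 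I would carry this out by (i) fixing $t$, (ii) exposing the endpoint of $a_t$ and the relative order of the endpoints of $a_1,\dots,a_{t-1}$, (iii) observing that the inner arcs that can center-cross $a_t$ are, reading outward from $a_t$, only those whose endpoints beat enough later endpoints to avoid being blocked — a set whose expected size is $\le (k+2)\sum_{m=1}^{t}\tfrac1m$-controlled — and (iv) summing over $t$.

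\textbf{Main obstacle.} The hard part will be the combinatorial geometry underlying the per-pair probability estimate: translating ``$a_s$ and $a_t$ see each other through the center'' into a clean statement purely about the relative order of the free endpoints of $a_s,\dots,a_t$ and the number of them on a given side, being careful about the convention that double intersections of a radial line with an obtuse arc count once, and about which arc's endpoint supplies the limiting line. Once that dictionary is in place the probability bound is a short balls-in-bins / records estimate and the final summation is immediate, yielding the $\tfrac{(k+1)(k+2)}{2}\log n + O(1)$ bound with the $O(1)$ absorbing the at most $k+2$ initial terms where fewer than $k+1$ arcs lie inside.
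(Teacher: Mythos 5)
There is a genuine gap here: the proposal circles the right ideas (rank/record statistics, harmonic sums) but never states, let alone proves, the one structural lemma that makes the bound come out to $\log n$, and the summation it does propose would not. The paper's proof is a per-arc, not per-pair, argument: any line of sight through the center touching an arc $a$ has a point on $a$ at some argument $\psi$, and the segment from that point to the center necessarily crosses \emph{every arc of smaller radius than $a$ that is longer than $a$} (since such an arc contains all of $a$'s arguments, in particular $\psi$). Hence an arc with $i-1$ longer arcs of smaller radius can have at most $k+1-i$ center visibilities, and in particular an arc with more than $k$ longer inner arcs has none. Since the $j$-th arc (by radius) is the $i$-th longest among the first $j$ with probability $1/j$, summing $(k+1-i)\cdot\frac1j$ over $i$ and $j$ gives $\binom{k+2}{2}H_n$ directly. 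Your proposal instead fixates on the arcs ``strictly between'' $a_s$ and $a_t$ as the blockers; that is the wrong set — the crossings that force the record condition come from the arcs \emph{inside} each endpoint of the edge that are longer than it, and these are what make participation in any center edge a probability-$O((k+1)/j)$ event for the $j$-th arc.

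Two concrete errors follow from this. First, your displayed estimate $\sum_{t}\sum_{d=1}^{t-1}\frac{c(k+1)(k+2)}{d}\le c(k+1)(k+2)\,n\cdot\frac{H_n}{n}$ is false: the left side is $\Theta(n\log n)$, not $\Theta(\log n)$. Second, the reorganized sum in your steps (iii)--(iv) — an expected $\le (k+2)\sum_{m=1}^{t}\frac1m$ candidate inner neighbors for each outer arc $a_t$, summed over $t$ — again yields $\Theta(n\log n)$. To land on $\log n$ you need the expected number of center edges \emph{at the $j$-th arc} to be $O(k^2/j)$, which is exactly what the record lemma above delivers and what your sketch defers to the ``main obstacle.'' So the proposal correctly diagnoses where the difficulty lies but does not resolve it, and the route it sketches (per-pair probabilities conditioned on the arcs between the pair) is both harder and, as organized, does not sum to the claimed bound.
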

\begin{proof}
Notice that it suffices to consider the case where all endpoints are radially distinct. Define $S_i$ be the set of arcs with exactly $i-1$ longer arcs of smaller radius. Let $E_i$ be $\mathbb E \left[|S_i|\right]$. Label the arcs $a_1, \dots, a_n$ in order of increasing radius. The arc $a_j$ is in $S_i$ if it is the $i$th longest among $a_1, \dots, a_j$. This occurs with probability $1/j$. We therefore obtain $E_i = \sum_{j=i}^n \frac 1 j$ for $i \leq n$.

Now, if an arc is a member of $S_i$ then it is involved in at most $k+1-i$ visibilities through the center. (If $i \geq k+1$, it has no such visibilities). We obtain
\begin{equation*}
\mathbb C \leq \sum_{i=0}^k (k+1 - i) E_i = \binom{k+2}{2} \log n + O(1)\,,
\end{equation*}
as desired.
\end{proof}
\section{Conclusion and open questions}
This work makes progress towards a full understanding of arc and semi-arc $k$-visibility, and in particular gives the first full characterization of the family of semi-arc visibility graphs. It leaves open several questions.
\begin{itemize}
\item Corollary~\ref{cor:tightarcedge} shows that the bound of Theorem~\ref{thm:arcbound} is tight for $k=0$. Is this bound tight for general $k$?
\item Theorems~\ref{thm:semiarck_bound} and~\ref{thm:3k4} give tight bounds on the maximum number of edges in a semi-arc $k$-visibility graphs with $n$ vertices when $n \leq 3k + 4$ or $n \geq 5k + 5$. What is the maximum number of edges for other values of $n$? More concretely, is $K_8$ a semi-arc $1$-visibility graph?
\item Theorem~\ref{thm:ajbi} establishes that $A_j \not \subseteq B_i$ for all $i$ and $j$. Can $A_j \subseteq A_i$ for $i\neq{j}$?
\item What is the maximum possible thickness for arc and semi-arc $k$-visibility graphs? When $k=0$, we note that the bound we obtain for semi-arc visibility graphs is obviously tight, but that the bound for arc visibility graphs is not. Since $K_5$ is an arc-visibility graph, the maximum thickness is at least $2$, and Corollary~\ref{cor:arcthick} shows that it is at most $3$. We conjecture the the former bound is in fact correct.
\end{itemize}
\section{Acknowledgements}
The authors gratefully acknowledge the support of MIT PRIMES. The first author would also like to express his gratitude to his research teachers Richard Kurtz, Jeanette Collette, Dr. Lorraine Solomon, Dr. Daniel Kramer, and especially the second author for their guidance. The second author would like to acknowledge the generous support of the NSF Graduate Research Fellowship under Grant No.~DGE-1122374.
\bibliographystyle{amsplain}
\bibliography{SawWee15}
\end{document}